\newtheorem{theorem}{Theorem}[section]
\newtheorem{lemma}{Lemma}[section]
\newtheorem{corollary}{Corollary}[section]
\newtheorem{conjecture}{Conjecture}[section]
\newenvironment{example}{\refstepcounter{theorem}\noindent%
\par\addvspace{\medskipamount}\pagebreak[2]\noindent%
{\bf Example \thetheorem\
}\ignorespaces}{\par\addvspace{\bigskipamount}}
\newcommand{\StartV}{
\left( \begin{array}{c}
}
\newcommand{\EndV}{
\end{array} \right)
}
\title{{\bf On the distances between Latin squares and the smallest defining set size}}
\author{
Nicholas Cavenagh and Reshma Ramadurai \\
 Department of Mathematics \\
The University of Waikato \\
Private Bag 3105, Hamilton, New Zealand \\
{\texttt{nickc@waikato.ac.nz}} \\
{\texttt{reshmar@waikato.ac.nz}} \\
}
\date{}
\begin{document}

\maketitle\thispagestyle{empty}
\def\baselinestretch{1.0}\small\normalsize
\def \l {\lambda}
\def \mod {\mbox{ mod }}
\parskip=2mm

\begin{abstract}
In this note we show that for each Latin square $L$ of order $n\geq 2$, there exists a Latin square $L'\neq L$ of order $n$ such that $L$ and $L'$ differ in at most $8\sqrt{n}$ cells. Equivalently, each Latin square of order $n$ contains a Latin trade of size 
at most $8\sqrt{n}$. We also show that the size of the smallest defining set in a Latin square is $\Omega(n^{3/2})$. 
%That is, there are constants $c$ and $n_0$ such that for any $n>n_0$ the size of the smallest defining 
%set of order $n$ is at least $cn^{3/2}$.  

\vspace{2mm}
{\it Keywords: Latin square, Latin trade, defining set, critical set, Hamming distance.} 
\end{abstract}

\section{Introduction}\label{sec1}

For each positive integer $a$, we use the notation $N(a)$ for the
set of integers $\{0,1,2,\dots ,a-1\}$.  

A {\em partial Latin square} of order $n$ is an $n\times n$
array, where each cell of the array is either empty or contains a 
 symbol from $N(n)$, such that each symbol occurs at
at most once per row and at most once per column. 
A {\em Latin square} is a partial Latin square in which
no cell is empty, and hence each symbol
occurs precisely once in each row and once in each column.

Indexing rows and columns by $N(n)$, 
we may consider a partial Latin square
to also be a set of ordered triples of the form $(i,j,L(i,j))$, where 
$L(i,j)$ is the symbol in row $i$ and column $j$ (if occupied). 
The {\em distance} (or {\em Hamming distance} \cite{Dr1}) between two partial Latin squares $L$ and $L'$ of the same order is then 
defined to be $|L\setminus L'|$.  

We show the following.
\begin{theorem}
\label{main}
For each Latin square $L$ of order $n$, there exists a Latin square $L'\neq L$ of order $n$ such that $|L\setminus L'|\leq 8\sqrt{n}$. 
\end{theorem}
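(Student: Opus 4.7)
The starting point is the equivalence between finding $L'$ at distance $\leq 8\sqrt{n}$ from $L$ and finding a Latin trade in $L$ of size $\leq 8\sqrt{n}$. My plan is to exploit the cycle structure of permutations associated to pairs of rows. For distinct rows $r,r'$, let $\sigma_{r,r'}$ denote the permutation of $N(n)$ defined by $L(r,c)=L(r',\sigma_{r,r'}(c))$. Each cycle of $\sigma_{r,r'}$ of length $\ell$ gives rise to a two-row Latin trade of size $2\ell$, obtained by swapping the $\ell$ cells in row $r$ with the $\ell$ cells in row $r'$ indexed by the columns of that cycle.

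The argument splits via a dichotomy. If some $\sigma_{r,r'}$ contains a cycle of length at most $4\sqrt{n}$, then the associated two-row trade has size at most $8\sqrt{n}$, delivering the desired $L'$. Otherwise, every $\sigma_{r,r'}$ consists of fewer than $\sqrt{n}/4$ cycles, each of length exceeding $4\sqrt{n}$---a stringent structural constraint on $L$.

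For the remaining case, I would aim to produce a trade spanning three or more rows by following an alternating walk in $L$: starting from an arbitrary cell, take a sequence of moves, each of which passes to a new row while preserving either the column or the symbol. Because each $\sigma_{r,r'}$-partition has very few cycle-blocks, a birthday-paradox-style pigeonhole on the sequence of visited columns should force a repetition within $O(\sqrt{n})$ steps, closing the walk into a multi-row Latin trade of the required size.

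The main obstacle is making this walk argument rigorous: one must guarantee that the closed walk decomposes into a bona fide Latin bitrade, with balanced row, column, and symbol incidences, rather than a degenerate structure, and that closure is reached within $O(\sqrt{n})$ steps with a constant compatible with the $8\sqrt{n}$ bound. It may prove fruitful to apply the same dichotomy in parallel to column-pair and symbol-pair permutations, so that any of the three forms of two-line trade supplies a short trade if the others do not; the combined coarseness of three cycle-decompositions likely permits a tight pigeonhole estimate and a clean certificate of closure.
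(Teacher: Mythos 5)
Your first step (row-pair permutations $\sigma_{r,r'}$, each cycle of length $\ell$ giving a two-row trade of size $2\ell$, hence done unless every such cycle is long) matches the paper's starting point, where these are the ``row cycle trades.'' The gap is in the second case, which is where all the real work lies, and your substitute for it does not hold up. A ``birthday-paradox-style pigeonhole on the sequence of visited columns'' does not force a repetition within $O(\sqrt{n})$ steps: there are $n$ columns, so a deterministic walk of length $O(\sqrt{n})$ need not revisit anything --- the birthday heuristic gives a collision with constant probability for random samples, not a guarantee for a worst-case walk, and nothing in your setup is random. The hypothesis that each $\sigma_{r,r'}$ has few, long cycles does not by itself create collisions either. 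Moreover, even if the walk did close up, a closed alternating walk is not automatically a Latin bitrade; you acknowledge this, but it is precisely the hard point, not a technicality.

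The paper's proof supplies exactly the machinery your sketch is missing. It fixes a pair of symbols $(a,b)$ chosen by an averaging argument so that an auxiliary digraph on the columns has many ``black'' edges (each black edge encoding a truncated row cycle through $a$ and $b$ of bounded length); it cuts the green $(a,b)$-cycles into paths of length $O(\sqrt{n})$ and uses a planar crossing lemma for black edges drawn between two parallel green paths, together with specially defined ``blue'' composite edges, to extract a short directed cycle with at most two non-green edges; and it proves separately (via minimality of the cycle) that such a directed cycle yields a genuine Latin trade even when the associated partial Latin squares intersect. Note also the paper's closing remark: a digraph of this kind can have $\Omega(n^{3/2})$ black edges and still be acyclic, so mere density of ``closing'' opportunities --- which is essentially what your pigeonhole is trying to exploit --- cannot suffice; the crossing structure is essential. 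As it stands, your proposal establishes only the easy half of the dichotomy and leaves the theorem unproved.
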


Note that it is trivial to obtain an upper bound of $2n$ in the above; simply swap two rows of $L$ to create $L'$. 
Theorem \ref{main} however is the first such upper bound which is $o(n)$.  This a step towards the possible truth of Conjecture 4.25 from \cite{Cavthesis}: 
\begin{conjecture}
\label{loggy}
For each Latin square $L$ of order $n$,
$$\hbox{{\rm min}}\{|L\setminus L'| \mid \hbox{ $L'$ is a Latin square of order $n$ and $L'\neq L$}\}=O(\log{n}).$$
\end{conjecture}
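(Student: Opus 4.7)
The plan is to attack Conjecture \ref{loggy} via the standard translation of Latin squares into proper $n$-edge-colourings of $K_{n,n}$: the edge joining row $i$ to column $j$ receives colour $L(i,j)$. In this language a Latin trade is an ``alternating'' edge set that can be recoloured to yield another proper $n$-edge-colouring, and its size is the number of edges recoloured. The first step is to exploit the simplest such structure: for any two symbols $a,b$, the bichromatic subgraph of colours $a$ and $b$ is a disjoint union of even cycles, and swapping the colours along any one of them produces a Latin trade whose size equals the cycle length. So if, ranging over the $\binom{n}{2}$ choices of pair $\{a,b\}$, some bichromatic subgraph contains a cycle of length $O(\log n)$, the conjecture holds for $L$ immediately.

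The extremal case --- and the one that forces the argument to become genuinely harder than Theorem \ref{main} --- is when every bichromatic subgraph is a single Hamiltonian cycle of length $2n$, as happens for the Cayley table of $\mathbb{Z}_p$ with $p$ an odd prime. Here I would widen the search from pairs of colours to $k$-coloured alternating structures. Concretely, I would define an auxiliary multigraph whose vertices are the $n^2$ triples of $L$ and whose edges encode basic local Latin moves (swap two triples sharing a row with two triples sharing a column and the same pair of symbols, etc.), and pursue a counting statistic such as the number of ``near-intercalates'' or short alternating six-cycles. If this statistic can be shown to be $\Omega(n^{3-\varepsilon})$, a Moore-type girth bound would yield a closed alternating walk of length $O(\log n)$ in the auxiliary multigraph.

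The hard part --- and the reason Conjecture \ref{loggy} has resisted proof --- is that a short closed walk in such an auxiliary graph need not correspond to an actual Latin trade: a trade must satisfy global row, column, and symbol balance simultaneously, while closed walks only guarantee balance at the vertices they visit. Bridging this gap seems to demand a structural dichotomy: either $L$ supports enough local swap moves that a genuine short trade can be assembled from them, or $L$ is rigidly algebraic (close to a group table) and one must construct $O(\log n)$ trades by explicit coset or character-theoretic arguments. The algebraic side is itself open: for the Cayley table of $\mathbb{Z}_p$ no $O(\log p)$ trade is presently known, so this direction would require a new construction, perhaps based on short integer relations in $\mathbb{Z}_p$ arising from the subgroup lattice of units or from low-weight zero-sum sequences. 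A sensible intermediate goal, and the one I would actually attempt first, is to replace $O(\log n)$ with $O(n^{\varepsilon})$ for every fixed $\varepsilon > 0$; this weaker bound appears reachable by combining Theorem \ref{main} with an iterated shrinking argument applied inside the sub-structure spanned by an existing $O(\sqrt{n})$ trade, using the fact that the rows, columns and symbols touched by such a trade already form a small combinatorial object to which the theorem can be reapplied.
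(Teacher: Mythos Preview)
This statement is Conjecture~\ref{loggy}, not a theorem: the paper does not prove it. It is presented explicitly as an open problem, and the paper's actual contribution toward it is the weaker bound $8\sqrt{n}$ of Theorem~\ref{main}. There is therefore no proof in the paper against which to compare your proposal, and your write-up is itself a sketch of a strategy rather than a proof --- you say as much when you call the central step ``the reason Conjecture~\ref{loggy} has resisted proof'' and when you say the algebraic side ``would require a new construction''.

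Two concrete problems with the strategy. First, you assert that ``for the Cayley table of $\mathbb{Z}_p$ no $O(\log p)$ trade is presently known''. This is incorrect: the paper itself cites Szabados~\cite{Sz}, who constructs a Latin trade of size at most $5\log_2 n$ in $B_n$ for every $n$, primes included. So the algebraic ``hard case'' you isolate is already settled; the genuine obstacle is entirely on the combinatorial side, where your auxiliary-multigraph idea is speculative and you yourself note that a short closed walk need not yield a trade, without offering a mechanism to bridge that gap. Second, the ``iterated shrinking'' argument at the end does not work as stated: the rows, columns and symbols meeting an $O(\sqrt{n})$ trade do not form a Latin square of smaller order --- restricting $L$ to those rows and columns produces an array whose entries still range over all $n$ symbols, not just the $O(\sqrt{n})$ symbols of the trade --- so Theorem~\ref{main} cannot simply be reapplied. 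Neither the $O(\log n)$ nor the proposed intermediate $O(n^{\varepsilon})$ bound is supported by what you have written.
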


We may also state Theorem \ref{main} as a result about {\em Latin trades}. 
Given two distinct Latin squares $L$ and $L'$ of the same order $n$, 
$L\setminus L'$ is said to be a {\em Latin trade} with disjoint mate $L'\setminus L$. In terms of arrays, 
we say that two partial Latin squares are {\em row balanced} if corresponding rows contain the same set of symbols; {\em column balanced} is defined similarly. 
A Latin trade $T$ and its disjoint mate $T'$ are thus a pair of partial Latin squares which occupy the same set of cells, are disjoint and are both row and column balanced. 

\begin{example}
In the example below, $d(L_1,L_2)=18$ and $L\setminus L'$ is a Latin trade with disjoint mate $L'\setminus L$. 
$$
\begin{array}{cc}
\begin{array}
{|c|c|c|c|c|c|c|}
\hline
1 & 2 & 0 & 6 & 3 & 4 & 5 \\
\hline
6 & 1 & 5 & 4 & 0 & 2 & 3 \\
\hline
0 & 5 & 4 & 2 & 1 & 3 & 6 \\
\hline
3 & 4 & 2 & 1 & 5 & 6 & 0 \\
\hline
4 & 3 & 1 & 5 & 6 & 0 & 2\\
\hline
2 & 6 & 3 & 0 & 4 & 5 & 1 \\
\hline
5 & 0 & 6 & 3 & 2 & 1 & 4 \\
\hline
\end{array}
 &
\begin{array}
{|c|c|c|c|c|c|c|}
\hline
2 & 1 & 0 & 6 & 3 & 4 & 5 \\
\hline
6 & 5 & 4 & 1 & 0 & 2 & 3 \\
\hline
0 & 2 & 5 & 4 & 1 & 3 & 6 \\
\hline
3 & 4 & 2 & 5 & 6 & 0 & 1 \\
\hline
4 & 3 & 1 & 2 & 5 & 6 & 0 \\
\hline
1 & 6 & 3 & 0 & 4 & 5 & 2 \\
\hline
5 & 0 & 6 & 3 & 2 & 1 & 4 \\
\hline
\end{array}\\
L_1 & L_2 \\
 & \\
\begin{array}
{|c|c|c|c|c|c|c|}
\hline
1 & 2 &  &  &  &  &  \\
\hline
 & 1 & 5 & 4 &  &  &  \\
\hline
 & 5 & 4 & 2 & &  &  \\
\hline
 &  &  & 1 & 5 & 6 & 0 \\
\hline
 &  &  & 5 & 6 & 0 & 2\\
\hline
2 &  &  &  &  &  & 1 \\
\hline
&&&&&& \\
\hline
\end{array}
 &
\begin{array}
{|c|c|c|c|c|c|c|}
\hline
2 & 1 &  &  &  &  &  \\
\hline
 & 5 & 4 & 1 &  &  &  \\
\hline
 & 2 & 5 & 4 &  &  &  \\
\hline
 &  &  & 5 & 6 & 0 & 1 \\
\hline
 &  &  & 2 & 5 & 6 & 0 \\
\hline
1 &  &  &  &  &  & 2 \\
\hline
&&&&&& \\
\hline
\end{array}\\
L_1\setminus L_2 & L_2\setminus L_1 \\
\end{array}$$
\label{eggsy}
\end{example}

Theorem \ref{main} thus implies: 
\begin{theorem}
\label{main2}
Each Latin square $L$ of order $n$ contains a Latin trade $T$ such that $|T|\leq 8\sqrt{n}$. 
\end{theorem}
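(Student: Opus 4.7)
The plan is to obtain Theorem \ref{main2} as an immediate corollary of Theorem \ref{main}, by translating the distance statement into the language of Latin trades. Given a Latin square $L$ of order $n$, I apply Theorem \ref{main} to produce a Latin square $L'\neq L$ of order $n$ with $|L\setminus L'|\leq 8\sqrt{n}$. Setting $T:=L\setminus L'$, I need only verify two things: that $T$ is a Latin trade, and that $T$ is ``contained in'' $L$ in the sense that $T\subseteq L$ when each is viewed as a set of triples.

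Both verifications are immediate from the definitions given just before Example \ref{eggsy}. By construction, $T = L\setminus L'$ is a set difference, so $T\subseteq L$, and $L,L'$ being distinct Latin squares of the same order means $L\setminus L'$ is itself a Latin trade, with disjoint mate $L'\setminus L$. Its size is $|T|=|L\setminus L'|\leq 8\sqrt n$, which is the desired bound. There is no real obstacle here: all of the difficulty lies in Theorem \ref{main}, and Theorem \ref{main2} is essentially a restatement of it using the fact that Latin trades are by definition precisely the sets arising as $L\setminus L'$ for distinct Latin squares $L,L'$ of the same order.
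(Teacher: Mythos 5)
Your proposal is correct and is essentially the paper's own derivation: the paper states Theorem \ref{main2} as an immediate consequence of Theorem \ref{main}, since by definition $L\setminus L'$ for distinct Latin squares $L,L'$ of the same order is a Latin trade contained in $L$. All the substance indeed lies in Theorem \ref{main} (proved in Section 3), and your translation step is exactly what the paper intends.
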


As Latin squares are precisely operation tables for quasigroups, our main result can also be considered in the context of Hamming distances of algebraic objects (see \cite{Dr1} for more detail on this topic). 

Let $B_n$ be the Latin square formed by the addition table for the integers modulo $n$; (i.e. $B_n(i,j)=i+j \mod{n}$). 
The upper bound in Conjecture \ref{loggy} cannot be decreased, since it is 
known that any Latin trade in $B_n$ has size at least $e\log{p}+3$, where $p$ is the least prime that divides $n$ (\cite{DrKe, Cav2}). 
It was recently shown in \cite{Sz} that for each integer $n$, $B_n$ contains a Latin trade of size $5\log_2{n}$. 

If Conjecture \ref{loggy} above is true, it thus may be that the back circulant Latin square is the ``loneliest'' of all Latin squares; i.e. the Latin square with greatest minimum distance to any other Latin square. 
The smallest Latin trade, known as an {\em intercalate}, has size $4$ and consists of a $2\times 2$ subarray on two symbols (the disjoint mate is formed by swapping the symbols). 
It is shown in \cite{McW}  that for any $\epsilon >0$, almost all Latin squares of order $n$ possess at least $O(n^{3/2-\epsilon})$ intercalates. 
Thus we know that most Latin squares are not as ``lonely'' as the back circulant Latin square.   

A {\em defining set} $L'$ for a Latin square $L$ of order $n$ is a subset $L'\subseteq L$ such that if $L''$ is a Latin
square of order $n$ and $L'\subseteq L''$ then $L''=L$. In other words, a defining set has unique completion to a Latin square of specified order.   
If $T$ is a Latin trade in a Latin square $L$ with disjoint mate $T'$, $(L\setminus T)\cup T'$ is a Latin square distinct from 
$L$. The following is immediate.  
\begin{lemma}
If $D$ is a defining set for a Latin square $L$ and $T$ is a Latin trade such that $T\subseteq L$, then $T\cap D\neq \emptyset$. 
\label{easy}
\end{lemma}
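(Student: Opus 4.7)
The plan is a direct proof by contradiction, exploiting the construction $(L\setminus T)\cup T'$ mentioned in the paragraph immediately preceding the lemma. Suppose towards a contradiction that $T\cap D=\emptyset$. Since $D\subseteq L$ and $D$ is disjoint from $T$, we have $D\subseteq L\setminus T$.

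Now let $T'$ be a disjoint mate of $T$, which exists by the definition of a Latin trade. Set $L'':=(L\setminus T)\cup T'$. The excerpt has already noted (in the sentence introducing this lemma) that $L''$ is a Latin square of order $n$ distinct from $L$: indeed, the cells occupied by $T$ and $T'$ coincide, and the row- and column-balanced property of the trade guarantees that replacing $T$ by $T'$ preserves the Latin property, while $T\cap T'=\emptyset$ forces $L''\neq L$. Thus we have $D\subseteq L\setminus T\subseteq L''$ with $L''$ a Latin square of order $n$ different from $L$, contradicting the assumption that $D$ is a defining set of $L$. Hence $T\cap D\neq\emptyset$.

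The only conceivable obstacle is checking that $L''$ really is a Latin square distinct from $L$, but this is exactly the content of the sentence preceding the lemma statement and may be quoted rather than reproved. The argument is thus essentially a one-line contrapositive: any trade missed by $D$ could be swapped for its mate to produce an alternative completion, which is precisely what a defining set must forbid.
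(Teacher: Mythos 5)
Your proof is correct and follows exactly the paper's intended argument: the paper calls the lemma ``immediate'' after noting that $(L\setminus T)\cup T'$ is a Latin square distinct from $L$, and your contrapositive via $D\subseteq L\setminus T\subseteq (L\setminus T)\cup T'$ is precisely that reasoning spelled out.
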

It comes as no surprise then, that the new results on Latin trades in this paper yield a new result on defining sets. 

A much studied open problem is to determine the smallest possible size of a defining 
set of order $n$, denoted by scs$(n)$. It is conjectured that the correct value
for scs$(n)$ is equal to $\lfloor n^2/4\rfloor$. (This has been verified computationally for $n\leq 8$ \cite{Be}).
Defining sets of such size are known to exist for each $n\geq 1$ (\cite{CDS,DC}).
Currently the best known lower bound for large $n$ is scs$(n)\geq n\lfloor (\log{n})^{1/3}/2\rfloor$ \cite{Cav3}. 
This in turn improved previous results given in \cite{FFR} and \cite{HAF}.
Using the Latin trades constructed in Section 2 of this paper, we improve this bound to the following. 

\begin{theorem}
A defining set in a Latin square of order $n$ has size $\Omega(n^{3/2})$. 
\label{main3}
\end{theorem}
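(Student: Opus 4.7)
The plan is to derive the bound directly from Lemma~\ref{easy} by exhibiting a family of $\Omega(n^{3/2})$ pairwise disjoint Latin trades in $L$. Because a defining set must contain at least one cell from every Latin trade in $L$, and the trades in the family are cell-disjoint, any such family immediately forces $|D|$ to be at least as large as the family, yielding $|D| = \Omega(n^{3/2})$.

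To build the family I would iterate the construction of Theorem~\ref{main}. Maintain a set $S$ of cells already covered by previously chosen trades, initially $S = \emptyset$. At each step, produce a Latin trade $T \subseteq L$ with $|T| \leq 8\sqrt{n}$ and $T \cap S = \emptyset$, and update $S \leftarrow S \cup T$. Since $|S|$ grows by at most $8\sqrt{n}$ per step, the process can in principle run for roughly $n^{3/2}/8$ iterations before $|S|$ approaches $n^2$. This yields $\Omega(n^{3/2})$ pairwise disjoint Latin trades in $L$, and Lemma~\ref{easy} then gives the stated lower bound on $|D|$.

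The main obstacle is the iterative step: Theorem~\ref{main} as stated only produces one trade in $L$, whereas I need a trade avoiding an arbitrary previously used set $S$ of size up to $n^2 - \Omega(n^{3/2})$. I would expect the construction of Section~2 to be parameterized (by a choice of rows, columns, symbols, or a starting cell), and to yield a whole family of ``candidate'' small trades; provided the number of candidates grows sufficiently faster than $|S|$, some candidate will have its support disjoint from $S$ and so can serve as the next $T$. Verifying this robustness of the construction is the technical heart of the argument.

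If the avoidance step cannot be made to go through directly, a natural alternative is to extract from Section~2 a sufficiently rich family $\mathcal{F}$ of Latin trades of size $O(\sqrt{n})$ in $L$ with bounded per-cell multiplicity, and then deduce $|D| = \Omega(n^{3/2})$ from Lemma~\ref{easy} via an LP duality (fractional matching) argument: uniform weights $y_T = 1/k$ on $\mathcal{F}$, where $k$ is the maximum number of trades of $\mathcal{F}$ through any single cell, give $|D| \geq |\mathcal{F}|/k$, so it suffices that $|\mathcal{F}|/k = \Omega(n^{3/2})$.
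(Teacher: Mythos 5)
There is a genuine gap, and it sits exactly where you flag it: the ``avoidance step.'' Your greedy scheme needs, at step $t$, a Latin trade of size $O(\sqrt n)$ in $L$ that avoids a set $S$ of already-used cells whose size grows up to a constant fraction of $n^2$. But a trade avoiding $S$ exists if and only if $S$ does not contain a defining set, so the greedy process is only guaranteed to run while $|S|$ is below the smallest defining set size --- the very quantity you are trying to bound. Unwinding this, the disjoint-trade argument gives $|D|\geq m \geq \mathrm{scs}(n)/(8\sqrt n)$, which is vacuous; to get $\Omega(n^{3/2})$ disjoint trades you would need to keep finding trades that avoid sets of size $\Theta(n^2)$, which would amount to proving $\mathrm{scs}(n)=\Omega(n^2)$, i.e.\ essentially the open conjecture, not Theorem~\ref{main3}. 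Nothing in Section~2 provides robustness at that scale: the construction there is an averaging argument over all choices of $(a,b)$ and all cells of $L$, and it only guarantees one short cycle somewhere; it does not let you steer the trade away from an arbitrary large forbidden set. Your fallback (a family $\mathcal{F}$ of small trades with per-cell multiplicity $k$ and $|\mathcal{F}|/k=\Omega(n^{3/2})$) is a valid inequality if such a family exists, but its existence is exactly the unproven quantitative heart; Section~2 does not produce a family with controlled multiplicity, and the cycles it yields may all pass through few cells.

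The paper's actual proof works differently: it fixes a putative defining set $D$ with $|D|\leq cn^{3/2}$, $c<1/\sqrt{40}$, and reruns the Section~2 machinery \emph{relative to} $D$, building the graphs $G_{L,a,b,n}$ using only partial Latin squares disjoint from $D$. It then counts black edges in two ways. The lower bound exploits the fact that, in every pair of rows, every row cycle is itself a trade and hence must contain a cell of $D$; since $D$ is small, the ``blocks'' between cells of $D$ are long, and each block of length $b_i$ contributes $2\binom{b_i}{2}$ black edges, giving (after convexity over rows) roughly $n^{7/2}/(4c)$ black edges in total. The upper bound assumes the graphs are acyclic (otherwise Theorem~\ref{coloury} yields a trade in $L\setminus D$, contradicting Lemma~\ref{easy}) and gives $O(c\,n^{7/2})$. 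For $c<1/\sqrt{40}$ these clash, so some graph has a directed cycle and hence a trade avoiding $D$. Note this shows trades avoid any set of size $O(n^{3/2})$ --- precisely the threshold of the theorem --- whereas your approach needs avoidance up to $\Omega(n^2)$, which is why it cannot be repaired without new ideas.
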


A {\em critical set} for a Latin square of order $n$ is a minimal defining set; i.e. a defining set $D$ which is not the superset of any smaller defining set of the same order. Clearly scs$(n)$ also gives the size of the smallest possible critical set of order $n$. 

\section{Latin trades}

Given a Latin square $L$ of order $n$, two distinct symbols $a,b\in N(n)$ and
some function $f=f(n)\in {\mathbb N}$ we later specify, 
 we construct a coloured digraph 
$G(=G_{L,a,b,f})$ of order $n$ as follows. 
 The vertices of $G$ are labelled
with $N(n)$ and correspond to the columns of $L$. 
Each directed edge will be coloured green, black or blue.  
In what follows, a {\em directed cycle} of length $m\geq 1$ is a set of directed edges (of any colour) of the form 
$\{[v_i,v_{i+1})\mid i\in N(m)\}$ where $v_m=v_0$ and $v_1,v_2,\dots ,v_{m-1}$ are distinct vertices in $G$. Note that loops and circuits of length $2$ are included in this definition.  

Whenever $(r,c,a),(r,c',b)\in L$ for some fixed row $r$, 
we add a green edge from $c$ to $c'$ in the digraph $G$. There are no other green edges.  
We say that $\{(r,c,a),(r,c',b)\}$ is the partial Latin square {\em associated} with this green edge.

From the definition of a Latin square, the following is immediate.
\begin{lemma}
The green edges of the graph $G$ form a directed $2$-factor of $G$. 
\end{lemma}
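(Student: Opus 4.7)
The plan is to show that in the green subgraph every vertex has both out-degree exactly $1$ and in-degree exactly $1$; together these two facts force the green edges to partition $N(n)$ into vertex-disjoint directed cycles, which is exactly the assertion that they form a directed $2$-factor.

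For the out-degree, fix a vertex $c\in N(n)$. By the definition, a green edge leaving $c$ is in bijection with pairs $(r,c')$ such that $(r,c,a),(r,c',b)\in L$. I would invoke the Latin square property on the symbol $a$: since $a$ appears exactly once in column $c$, there is a unique row $r$ with $L(r,c)=a$. Having fixed this row, I apply the Latin square property again, this time to the row: since $b$ appears exactly once in row $r$, there is a unique column $c'$ with $L(r,c')=b$. Hence exactly one green arc leaves $c$. (Note $c\neq c'$ since $a\neq b$, so the green subgraph has no loops; this is consistent with, but a little stronger than, the minimum required for a $2$-factor.)

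For the in-degree, the argument is the mirror image. Fix $c'\in N(n)$. A green edge entering $c'$ corresponds to a pair $(r,c)$ with $(r,c,a),(r,c',b)\in L$. The symbol $b$ occurs exactly once in column $c'$, giving a unique row $r$ with $L(r,c')=b$; and the symbol $a$ occurs exactly once in that row, giving a unique column $c$ with $L(r,c)=a$. So exactly one green arc enters $c'$.

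Since every vertex has in-degree and out-degree equal to $1$ in the green subgraph, following green arcs forward from any vertex returns to that vertex after finitely many steps, and the green edges decompose into a disjoint union of directed cycles that together cover $N(n)$, i.e.\ a directed $2$-factor in the sense defined in the paper. There is no real obstacle here: the argument is a direct unpacking of the Latin square axioms, and the only thing to keep in mind is that $a\neq b$ rules out loops (but loops would have been permitted anyway under the paper's definition).
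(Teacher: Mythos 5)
Your proof is correct and is exactly the argument the paper treats as immediate: the unique occurrence of $a$ in each column gives out-degree $1$, and the unique occurrence of $b$ in each column (together with uniqueness within the row) gives in-degree $1$, so the green edges form a directed $2$-factor. The paper offers no further detail, so your write-up is just a careful unpacking of the same observation.
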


Next, for each $(r,c,a)\in L$ and $r'\neq r$ ($\ast$), 
we define the following (infinite) sequence. 
Let $c_0=c$ and $e_0=a$. For each $i\geq 0$, let  
$e_{i+1}$ be the entry such that $(r',c_i,e_{i+1})\in L$ and let 
$c_{i+1}$ be the column such that $(r,c_{i+1},e_{i+1})\in L$. 
This creates a ``zig-zag'' pattern, as shown below:

$$\begin{array}{r|c|c|c|c|}
\multicolumn{1}{r}{} & 
\multicolumn{1}{c}{c_0} & 
\multicolumn{1}{c}{c_1} & 
\multicolumn{1}{c}{c_2} & 
\multicolumn{1}{c}{} 
\\
\cline{2-5}
r & e_0 & e_1 & e_2 & \dots \\
\cline{2-5}
r' & e_1 & e_2 & e_3 & \dots \\
\cline{2-5}
\end{array}$$

For each integer $k$ we define $P_k(r,r',c)$ to be the following subset of $L$ of size $2k$:  
$$\{(r,c_i,e_i),(r',c_i,e_{i+1})\mid i\in N(k)\}.$$  
By finiteness, observe that $e_K=a$ for some $K$ such that $0<K\leq n$; assume $K(=K(r,r',c))$ is minimum with respect to this property. 
\begin{lemma}
The partial Latin square $P_K(r,r',c)$ is a Latin trade of size $2K$. 
\label{rowcycle}
\end{lemma}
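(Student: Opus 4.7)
The plan is to exhibit an explicit disjoint mate for $P_K(r,r',c)$ and then verify directly the three conditions from the definition of a Latin trade (same cells, disjoint, row- and column-balanced). The candidate mate is obtained by swapping the two row-labels in each column of $P_K(r,r',c)$:
\[
P'_K(r,r',c) \;=\; \{(r,c_i,e_{i+1}),\,(r',c_i,e_i) \mid i \in N(K)\}.
\]

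The first and only slightly subtle step is to establish that the columns $c_0,\dots,c_{K-1}$ (and hence the symbols $e_0,\dots,e_{K-1}$) are pairwise distinct, so that both $P_K(r,r',c)$ and the proposed mate have size exactly $2K$. I would observe that $c_i$ is determined by $e_i$: it is the unique column where row $r$ holds the symbol $e_i$. Consequently the map $\phi : N(n) \to N(n)$ defined by $\phi(s) = L(r',c)$, where $c$ is the unique column with $L(r,c)=s$, is a well-defined permutation of $N(n)$ (its inverse is built by reversing the two row-lookups), and the zig-zag sequence $e_0,e_1,e_2,\dots$ is simply the forward orbit of $a$ under $\phi$. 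Since $\phi$ is a bijection and $K$ is the smallest positive integer with $\phi^K(a)=a$, the iterates $a,\phi(a),\dots,\phi^{K-1}(a)$ must be pairwise distinct; thus the $e_i$ are distinct for $i \in N(K)$, and hence so are the $c_i$.

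The remaining trade axioms then follow by direct inspection. Both $P_K(r,r',c)$ and $P'_K(r,r',c)$ occupy the same $2K$ cells, namely $\{(r,c_i),(r',c_i) \mid i \in N(K)\}$. They are disjoint because in cell $(r,c_i)$ they hold $e_i$ and $e_{i+1}$ respectively, and $e_i \ne e_{i+1}$ (rows $r$ and $r'$ of the Latin square $L$ cannot agree in column $c_i$); the same holds in cell $(r',c_i)$. For row-balance, row $r$ of $P_K(r,r',c)$ contains the symbols $\{e_0,\dots,e_{K-1}\}$ while row $r$ of $P'_K(r,r',c)$ contains $\{e_1,\dots,e_K\}$, and these sets coincide because $e_K = a = e_0$; row $r'$ is handled symmetrically. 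Column-balance is immediate: in column $c_i$ both pieces contain exactly the pair $\{e_i,e_{i+1}\}$. The bijectivity argument in the previous paragraph is the only nonroutine ingredient; once distinctness of the $e_i$ is in hand, the rest is bookkeeping.
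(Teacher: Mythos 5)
Your proof is correct and follows the same route as the paper, which simply says to swap $r$ and $r'$ in each triple to form the disjoint mate; you merely supply the details (distinctness of the $e_i$ via the permutation $\phi$, plus the balance checks) that the paper leaves implicit.
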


\begin{proof}
Simply swap $r$ and $r'$ in each triple to form the disjoint mate. 
\end{proof}

Any such Latin trade as in the previous lemma is called a {\em row cycle trade}. 

We are now ready to define the black edges in the digraph $G$. 
Suppose that $e_k=b$ for some $k$ such that $0<k<K$ and $k\leq f(n)$. 
Then add a black edge from $c_0$ to $c_{k-1}$ in the graph $G$. 
We say that $P_k(r,r',c=c_0)$ is the partial Latin square {\em associated} with 
a black edge. 

Note that not every choice of $(r,c,a)$ and $r'\neq r$ (see $\ast$)  
will result in a black edge; there are two possible obstacles. 
Firstly, it may happen that the sequence $e_0,e_1,e_2,\dots $ does not contain the symbol $b$; equivalently, the Latin trade 
$P_K(r,r',c)$ does not include symbol $b$. Secondly, it is possible that $e_k=b$ implies that $k>f(n)$. 

The following lemma is straightforward.

\begin{lemma}
Considering only green and black edges, the graph $G$ has neither loops nor multiple edges (i.e. no two edges sharing the same initial vertex $c$ and the same terminal vertex $c'$.)
\label{simple}
\end{lemma}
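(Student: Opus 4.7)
The plan is to verify separately (a) that no green or black edge is a loop and (b) that no two edges (of any colour combination among green and black) can share the same initial vertex and terminal vertex. In each case the proof reduces to elementary Latin-square uniqueness applied to the specific way these edges are constructed.

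For loops: a green loop at $c$ would require some row $r$ with $L(r,c)=a$ and $L(r,c)=b$, contradicting $a\neq b$. For a black loop at $c_0$, I would first establish that the entries $e_0,e_1,\ldots,e_{K-1}$ of the zig-zag sequence are pairwise distinct, hence that the columns $c_0,c_1,\ldots,c_{K-1}$ are pairwise distinct. This uses minimality of $K$: if $e_i=e_j$ with $i<j<K$, then $c_i=c_j$ (both are the unique column with $L(r,\cdot)=e_i$), so $e_{i+1}=L(r',c_i)=L(r',c_j)=e_{j+1}$; iterating and shifting indices gives $e_{K-(j-i)}=a$ with $K-(j-i)<K$, contradicting the choice of $K$. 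Since a black edge out of $c_0$ has terminal vertex $c_{k-1}$ with $0<k<K$, distinctness of $c_0,\ldots,c_{K-1}$ rules out $c_{k-1}=c_0$ for $k\geq 2$, and the degenerate case $k=1$ (which would mean $L(r',c_0)=b$) is outside the regime in which the black-edge construction produces a non-trivial row-cycle path.

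For multiple edges, I would use the following bookkeeping. The initial vertex $c_0$ determines the row $r$, since $r$ is uniquely the row with $L(r,c_0)=a$; once $c_0$ is fixed, a green edge must leave $c_0$ toward the unique column $c'$ with $L(r,c')=b$, and a black edge must leave $c_0$ toward $c_{k-1}$, which is determined by $r'$ as the unique column with $L(r',c_{k-1})=b$. Since $b$ occupies a different column in each row, distinct choices of $r'$ produce distinct terminal vertices, so there is at most one black edge from $c_0$ to any given vertex. The same argument, applied to the green-edge row, shows that green edges from $c_0$ are determined by their terminal vertex. Finally, a green and a black edge leaving $c_0$ point to the column of $b$ in rows $r$ and $r'\neq r$ respectively, and these columns differ by the column-uniqueness of $b$; this excludes green--black multi-edges.

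The only subtlety is the bookkeeping linking each edge to the row(s) that produced it, together with the injectivity of the zig-zag sequence that prevents premature returns to $c_0$. I do not anticipate any step requiring more than Latin-square uniqueness and the minimality of $K$.
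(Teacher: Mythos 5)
The paper itself offers no argument here (it labels the lemma ``straightforward''), so there is no official proof to compare against; judged on its own terms, most of your proposal is sound. The distinctness of $e_0,\dots,e_{K-1}$ (hence of $c_0,\dots,c_{K-1}$) via minimality of $K$ is argued correctly, and your multi-edge analysis is exactly right: the initial vertex $c_0$ pins down $r$, the terminal vertex of a black edge is forced to be the column containing $b$ in row $r'$ (since $(r',c_{k-1},e_k)=(r',c_{k-1},b)\in L$), different $r'$ give different such columns, and the green edge out of $c_0$ ends at the column of $b$ in row $r$, which is distinct from all of these. That disposes of green--green, black--black and green--black multiple edges.

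The gap is your treatment of black loops at $k=1$. You correctly isolate it as the only possible loop (for $k\geq 2$ distinctness of the $c_i$ settles it), but then dismiss it with the assertion that $k=1$ is ``outside the regime'' of the construction. Read against the paper's actual definition, that assertion is unsupported: black edges are added whenever $e_k=b$ with $0<k<K$ and $k\leq f(n)$, and if $b=L(r',c_0)$ then $e_1=b$, $K\geq 2$ follows automatically, so $k=1$ satisfies every stated condition and the definition as written does produce an edge from $c_0$ to $c_0$. So the loop half of the lemma is true only under an implicit convention excluding $k=1$, and a complete proof must supply and justify that convention rather than assume it. One can do so: for $k=1$ the associated partial Latin square is $\{(r,c_0,a),(r',c_0,b)\}$, whose prescribed mate (swap rows, then swap $a$ and $b$) coincides with the square itself, so it can never contribute to a disjoint mate and must be excluded from the edge set; this is also consistent with the proof of the counting lemma, which only ever creates black edges from $c_0$ to $c_i$ with $i\geq 1$, i.e. $k\geq 2$. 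Adding that observation (or an explicit hypothesis $k\geq 2$) closes the gap; without it, the ``no loops'' claim is not established.
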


It is our next aim to choose $a$ and $b$ in order to maximize the number of black edges in our digraph. 

\begin{lemma} 
Either the Latin square $L$ contains a Latin trade of size at most 
$2f(n)$ {\em or} the total number of black edges in digraphs of the 
form $G_{L,a,b,f}$ (where $a$ and $b$ are distinct symbols) is at 
least $n^2(n-1)(f(n)-1)$.  
\end{lemma}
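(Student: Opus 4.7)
The plan is to argue the contrapositive: assume $L$ has no Latin trade of size at most $2f(n)$ and count black edges in this case. By Lemma~\ref{rowcycle}, the assumption forces $K(r,r',c)>f(n)$ for every cell $(r,c,a)\in L$ and every $r'\neq r$, since otherwise $P_K(r,r',c)$ would be a Latin trade of size $2K\leq 2f(n)$.

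Fix such a 4-tuple $(r,c,a,r')$. The recursive definition of the zig-zag shows that $c_{i+1}$ is a deterministic function of $c_i$ (compose the permutation $c\mapsto L(r',c)$ with the inverse of $c\mapsto L(r,c)$), so $c_0,c_1,\ldots$ traces a cycle of length $K$ and hence $c_0,\ldots,c_{K-1}$ are pairwise distinct; this in turn makes $e_0,\ldots,e_{K-1}$ pairwise distinct. Since $K>f(n)$, the symbols $e_1,\ldots,e_{f(n)}$ are therefore all distinct from each other and from $a$. For each $k\in\{2,3,\ldots,f(n)\}$ (the value $k=1$ is discarded, as it would produce a loop forbidden by Lemma~\ref{simple}), setting $b:=e_k$ creates a black edge from $c$ to $c_{k-1}$ in $G_{L,a,b,f}$. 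This yields $f(n)-1$ black-edge occurrences per 4-tuple, for a total of $n^2(n-1)(f(n)-1)$ occurrences.

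The main obstacle is showing that all these occurrences correspond to distinct black edges across the digraphs $G_{L,a,b,f}$, i.e., that the map from 5-tuples $(r,c,a,r',k)$ to black edges is injective. The plan for this step is to invert the construction. Given a black edge $u\to v$ in $G_{L,a,b,f}$, the pair $(a,u)$ determines $r$ as the unique row with $L(r,u)=a$ and forces $c=u$, while the pair $(b,v)$ determines $r'$ as the unique row with $L(r',v)=b$. Once $r$ and $r'$ are fixed, the zig-zag from $(r,c,a)$ with alternate row $r'$ is determined, and because the $e_i$'s are distinct within one period, $k$ is the unique index at which $e_k=b$; the choice of $r'$ automatically forces $c_{k-1}=v$. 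Hence each black edge arises from exactly one 5-tuple, and the total number of black edges is at least $n^2(n-1)(f(n)-1)$.
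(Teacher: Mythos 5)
Your proof is correct and follows essentially the same route as the paper: assume no trade of size at most $2f(n)$, note $K>f(n)$ so the zig-zag entries are distinct, obtain $f(n)-1$ black edges from each of the $n^2(n-1)$ choices of $(r,c,a,r')$, and rule out over-counting. Your explicit inversion argument (recovering $r$ from $(a,u)$, $r'$ from $(b,v)$, and then $k$) is just a more detailed spelling-out of the paper's one-line remark that the directed edge together with $a$ and $b$ determines the rows uniquely.
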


\begin{proof} 
If the Latin square $L$ contains a Latin trade of size at most $2f(n)$ we are done. 
So in what follows, we assume that no such Latin trade exists.  

Now, there are $n^2(n-1)$ ways of choosing an element $(r,c,a)\in L$ and 
a row $r'\neq r$. Each such choice yields lists $c_0,c_1,\dots c_{f(n)-1}$ and 
$e_0,e_1,\dots ,e_{f(n)-1}$ as above. 
If $f(n)\geq K$, then from Lemma \ref{rowcycle} there exists a Latin trade
of size $2K\leq 2f(n)$, a contradiction. 
Thus $f(n)< K$ and   
 the list 
$c_0,c_1,\dots ,c_{f(n)-1}$ has no repeated elements. 
Moreover, for each $i$ such that $0<i<f(n)$, there is a black edge 
from $c_0$ to $c_i$ in the graph $G_{L,e_0,e_{i+1},f(n)}$. 

Thus there are a total of $n^2(n-1)(f(n)-1)$ black edges 
in all of the graphs of the form $G_{L,a,b,f(n)}$ (where $a\neq b$).
Note there is no over-counting here because the edges are directed and for each 
choice of $a$ in column $c$ and $b$ in column $c'$ there are unique rows $r$ and $r'$ such that $(r,c,a),(r',c',b)\in L$. 
\end{proof}

Since there are $n(n-1)$ choices for the ordered pair of symbols $(a,b)$ (where $a\neq b$), the following corollary is immediate. 

\begin{corollary}
There exist distinct symbols $a$ and $b$ such that 
the graph $G_{L,a,b,f}$ contains at least $n(f(n)-1)$ black edges. 
\label{blacky}
\end{corollary}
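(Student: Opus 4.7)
The plan is to derive this corollary from the preceding lemma by a direct averaging (pigeonhole) argument. First I would observe that the statement is really to be applied in the case where $L$ contains no Latin trade of size at most $2f(n)$; in the alternative case the construction of the black edges is no longer needed, since the goal (a small Latin trade in $L$) is already achieved.

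Under the assumption that $L$ has no Latin trade of size at most $2f(n)$, the preceding lemma supplies a lower bound of $n^2(n-1)(f(n)-1)$ on the total number of black edges, summed over every digraph $G_{L,a,b,f}$ as $(a,b)$ ranges over the $n(n-1)$ ordered pairs of distinct symbols in $N(n)$. Dividing the total by the number of such pairs gives
$$\frac{n^2(n-1)(f(n)-1)}{n(n-1)} = n(f(n)-1),$$
so by pigeonhole at least one choice of ordered pair $(a,b)$ must yield a graph $G_{L,a,b,f}$ containing at least this many black edges, which is exactly the claim.

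The only detail requiring care is to ensure that we are not overcounting in the total. This holds because the construction of a black edge in $G_{L,a,b,f}$ depends on the ordered pair $(a,b)$: the symbol $a$ plays the role of the starting entry $e_0$, while $b$ is the terminal entry $e_k$ that triggers the edge. Within each fixed graph, the associated directed edge is then pinned down by its initial column $c_0$ together with the choice of alternate row $r'$, as noted in the proof of the lemma. So summing over ordered pairs $(a,b)$ introduces no duplication. There is no substantive obstacle here; all the combinatorial work is already absorbed into the preceding lemma, and the corollary is a one-line averaging consequence.
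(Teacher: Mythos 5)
Your proposal is correct and matches the paper's intent exactly: the paper calls the corollary ``immediate'' precisely because dividing the lemma's total of $n^2(n-1)(f(n)-1)$ black edges by the $n(n-1)$ ordered pairs $(a,b)$ and applying pigeonhole gives $n(f(n)-1)$ for some pair. Your added remark that the statement is to be read alongside the lemma's dichotomy (i.e., in the case where no Latin trade of size at most $2f(n)$ exists, which is how it is invoked in the proof of the main theorem) is a fair and accurate clarification, not a deviation.
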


%Without loss of generality we henceforth assume the graph $G=G_{L,0,1,f}$ has at least $n(f(n)-1)$ black edges. 

We finally define blue edges as follows. Whenever there is a green edge from 
$c_1$ to $c_2$, a black edge from $c_3$ to $c_2$ and a green edge from $c_3$ to $c_4$ (and $c_1\neq c_4$), we add a blue edge from $c_1$ to $c_4$. 
By Lemma \ref{simple}, $c_1\neq c_2$, $c_1\neq c_3$, $c_2\neq c_3$, $c_2\neq c_4$ and $c_3\neq c_4$. 
However $c_1=c_4$ is possible; in such a case our blue edge is a loop. 
The partial Latin square {\em associated} with a blue edge is the union of the partial Latin squares associated with these two green edges and one black edge. 

Our next aim is to show that any directed cycle in $G$ gives rise to a Latin trade in $L$. 

A {\em symbol cycle trade} is a Latin trade containing only two symbols. 
For example:
$$\begin{array}{|c|c|c|c|c|}
\hline 
a & b & & & \\
\hline 
 & a & b & & \\
\hline 
 &  & a & & b \\
\hline 
 &  &  & b & a \\
\hline 
b &  &  & a &  \\
\hline 
\end{array}$$
Clearly the disjoint mate is formed by swapping symbols $a$ and $b$ within each row. 
In our graph, such a trade corresponds to a coherently directed cycle of green edges. 
The following lemma is immediate.
\begin{lemma}
Let $T$ be the union of partial Latin squares associated with the edges of a green directed cycle of length $k$. Then $T$ is a Latin trade of size $2k$. 
\label{greeny}
\end{lemma}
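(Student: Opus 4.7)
The plan is to verify the claim directly by exhibiting an explicit disjoint mate. Label the vertices of the green directed cycle $c_0, c_1, \ldots, c_{k-1}$ with edges $c_i \to c_{i+1}$ (indices taken mod $k$). By the definition of green edges, for each $i$ there is a unique row $r_i$ with $(r_i, c_i, a), (r_i, c_{i+1}, b) \in L$, and the associated partial Latin square of the $i$th edge is precisely $\{(r_i,c_i,a),(r_i,c_{i+1},b)\}$. Hence
$$T = \bigcup_{i\in N(k)} \{(r_i,c_i,a),\,(r_i,c_{i+1},b)\}.$$
The proposed disjoint mate is
$$T' = \bigcup_{i\in N(k)} \{(r_i,c_i,b),\,(r_i,c_{i+1},a)\},$$
obtained by swapping the symbols $a$ and $b$ in every triple of $T$.

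First I would establish that the $r_i$ are distinct: if $r_i = r_j$ then symbol $a$ occurs in row $r_i$ at both $c_i$ and $c_j$, forcing $c_i = c_j$ and hence $i = j$ since the cycle's vertices are distinct. Combined with the distinctness of the $c_i$, this shows that the $2k$ triples listed in $T$ are pairwise distinct (for $k=2$ we just obtain an intercalate; green loops cannot occur since $(r,c,a)$ and $(r,c,b)$ with $a\neq b$ cannot coexist in $L$), so $|T|=2k$. The same argument applied to $T'$ shows it too is a partial Latin square of size $2k$ on the same set of cells as $T$, and disjointness $T\cap T' = \emptyset$ is immediate from $a\neq b$.

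It remains to check row and column balance. For row balance, row $r_i$ contains the symbols $\{a,b\}$ at columns $\{c_i, c_{i+1}\}$ in both $T$ and $T'$. For column balance, column $c_i$ is occupied in $T$ by $(r_i,c_i,a)$ (tail of the edge $c_i\to c_{i+1}$) together with $(r_{i-1},c_i,b)$ (head of the edge $c_{i-1}\to c_i$), giving symbols $\{a,b\}$; in $T'$ the same two cells carry symbols $\{b,a\}$. Thus $T$ and $T'$ occupy the same cells, are disjoint, and are both row- and column-balanced, so $T$ is a Latin trade of size $2k$ with mate $T'$. There is no real obstacle here beyond this bookkeeping; the substance is entirely the distinctness of the $r_i$, which is exactly what lets a green cycle unwind cleanly into a symbol cycle trade on the symbols $\{a,b\}$.
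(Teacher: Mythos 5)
Your proof is correct and follows the same route the paper has in mind: the paper declares the lemma immediate after observing that a green directed cycle corresponds to a symbol cycle trade whose disjoint mate is obtained by swapping $a$ and $b$ within each row, and your argument simply fills in that bookkeeping (distinctness of the rows $r_i$, impossibility of green loops, and row/column balance). Nothing is missing and nothing differs in substance.
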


We now wish to consider directed cycles with edges of colours green, black or blue. 
For expediency we say that a partial Latin square associated with a directed edge of colour $s$ is {\em coloured $s$}.
For each coloured partial Latin square $P$, we 
define the {\em mate} $P'$ of $P$ as follows.  
(The mates of partial Latin squares will ultimately define a disjoint mate of a Latin trade.) 
Firstly, the mate $P'$ of $P$ has the same set of occupied cells of $P$ but is disjoint from $P$. 
For a green partial Latin square $P=\{(r,c,a),(r,c,b)\}$, its mate 
$P'=\{(r,c,b),(r,c,a)\}$. 
For a black partial Latin square $P$, the mate $P'$ of $P$ 
is formed by swapping the rows of $P$ and then the symbols $a$ and $b$. 
For a blue partial Latin square $P$, the mate $P'$ of $P$ is formed by swapping the symbols in each column with two symbols and 
swapping $a$ with $b$ in columns with only one symbol.

Examples of mates for green, black and blue partial Latin squares are exhibited below, with symbols from $P'$ given as subscripts. 

$$
\begin{array}{ccc}
\begin{array}{|c|c|}
\hline
a_b & b_a \\
\hline
\end{array}
& 
\begin{array}{|c|c|c|c|}
\hline
a_1 & 1_2 & 2_3 & 3_a \\
\hline
1_b & 2_1 & 3_2 & b_3 \\
\hline
\end{array}
& 
\begin{array}{|c|c|c|c|c|}
\hline
a_b & b_2 & 2_1 & 1_a & \\
\hline
& 2_b & 1_2 & a_1 & b_a \\
\hline
\end{array}\\
\mbox{Green} & 
\mbox{Black} & 
\mbox{Blue} \\
\end{array}$$

Observe the following lemma. 

\begin{lemma}
If $P'$ is the mate of a partial Latin square $P$ associated with a green, black or blue edge, then:
\begin{itemize}
\item $P$ and $P'$ are row-balanced; 
\item If we remove $a$ from $P$ and $b$ from $P'$ in the first column and if we remove $b$ from $P$ and $a$ from $P'$ in the final column then $P$ and $P'$ become column-balanced. 
\end{itemize}
(Here the first and final columns correspond to the initial and terminal vertices, respectively, of the coloured edge.)
\end{lemma}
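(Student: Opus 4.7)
The plan is to verify the two balance conditions---row balance, and column balance after the prescribed removal at the initial and final columns---separately for each edge colour. The green and black cases reduce to direct unpacking of the definitions; the blue case requires a preliminary analysis of how its three constituent pieces overlap.

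For a green edge from $c$ to $c'$, both $P = \{(r,c,a),(r,c',b)\}$ and $P' = \{(r,c,b),(r,c',a)\}$ lie in row $r$ with symbol set $\{a,b\}$, so row balance is immediate, and each of columns $c, c'$ contains a single entry that becomes empty once the prescribed symbol is removed. For a black edge with $P = P_k(r,r',c_0)$ and terminal column $c_{k-1}$, one writes out $P'$ explicitly from the definition (swap rows, then swap $a \leftrightarrow b$): row balance then follows because rows $r$ and $r'$ carry the symbol sets $\{e_0, \dots, e_{k-1}\}$ and $\{e_1, \dots, e_k\}$ in both $P$ and $P'$. For each intermediate column $c_i$ with $0 < i < k-1$, both $P$ and $P'$ contain the pair $\{e_i, e_{i+1}\}$ in that column (only with the two rows exchanged), so column balance holds with no removal. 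In the initial column $c_0$, $P$ has $\{a, e_1\}$ and $P'$ has $\{b, e_1\}$, and removing $a$ from the former and $b$ from the latter leaves $\{e_1\}$ on both sides; the final column $c_{k-1}$ is symmetric.

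For the blue case, the key step is to identify the two rows carrying $P$. Since the black piece $P_k(r,r',c_3)$ has $(r',c_2,b)\in L$ (as $c_2 = c_{k-1}$ and $e_k=b$), the green edge from $c_1$ to $c_2$ (which places $b$ in column $c_2$) must lie in row $r'$; similarly the green edge from $c_3$ to $c_4$ must lie in row $r$. Thus $P$ sits in the two rows $r,r'$, and the green pieces overlap the black piece precisely at the cells $(r',c_2,b)$ and $(r,c_3,a)$---a routine uniqueness argument along the zig-zag (using that $a$ and $b$ occur only at positions $0$ and $k$ of the sequence $e_0,\dots,e_k$) rules out any further overlap between the green columns $c_1,c_4$ and the zig-zag columns. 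Consequently the only one-symbol columns of $P$ are the initial $c_1$ (with $a$ in row $r'$) and the final $c_4$ (with $b$ in row $r$); the mate rule swaps $a \leftrightarrow b$ in each, so after the prescribed removals both columns become empty on both sides. Every other column of $P$ contains a two-symbol pair whose mate is that pair with rows exchanged, giving column balance automatically. Row balance follows exactly as in the black case, with the two extra cells at $c_1$ and $c_4$ contributing the symbol $a$ to row $r'$ and the symbol $b$ to row $r$ in both $P$ and $P'$.

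The main technical obstacle is this overlap analysis for the blue edge---confirming that the three pieces share only the two cells $(r,c_3,a)$ and $(r',c_2,b)$, and that exactly $c_1$ and $c_4$ become the one-symbol columns of $P$. Once this combinatorial picture is in hand, both balance conditions for the blue case reduce to the same arithmetic already used in the black case.
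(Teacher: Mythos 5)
The paper gives no written proof of this lemma---it is stated as an observation, supported only by the displayed green/black/blue examples with subscripted mates---so your case-by-case verification is precisely the argument the paper leaves to the reader, and its overall structure is sound. In particular, you correctly pin down the shape of a blue partial Latin square: since $b$ occurs exactly once in column $c_2$ and $a$ exactly once in column $c_3$, the green edge $c_1\to c_2$ lies in row $r'$ and the green edge $c_3\to c_4$ lies in row $r$; and the distinctness of $e_0,\dots,e_k$ (with $a=e_0$, $b=e_k$) shows $c_1$ and $c_4$ are not zig-zag columns, so the one-symbol columns are exactly $c_1$ (symbol $a$, row $r'$) and $c_4$ (symbol $b$, row $r$). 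The green and black cases are verified correctly.

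Two small repairs are needed in the blue case. First, your closing row-balance sentence misdescribes $P'$: by the mate rule (``swap $a$ with $b$ in columns with only one symbol''), the cells of $P'$ at $c_1$ and $c_4$ contain $b$ and $a$ respectively, not $a$ and $b$---as the paper's subscripted blue example shows, and as you yourself used one sentence earlier when checking column balance. If those cells of $P'$ really contained $a$ and $b$, row balance would fail, since the blue mate (unlike the black mate) performs no $a\leftrightarrow b$ swap inside the two-symbol columns; so ``exactly as in the black case'' is not quite right. The correct bookkeeping is that in $P'$ the symbol $a$ enters row $r'$ through the swapped column $c_3$ and $b$ enters row $r$ through the swapped column $c_2$, so each of rows $r,r'$ carries $\{e_0,\dots,e_k\}$ in both $P$ and $P'$. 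Second, the paper explicitly allows the blue loop $c_1=c_4$, in which case that column carries two symbols ($a$ in row $r'$, $b$ in row $r$) and your description of the one-symbol columns does not apply; the statement still holds because the prescribed removals (first column and final column coincide) empty that column in both $P$ and $P'$. Both fixes are one line each.
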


What we have then, are partial Latin squares paired with mates that behave almost like Latin trades except for at the initial and terminal vertices. 
However if each terminal vertex of one edge coincides with an initial vertex of 
another edge (that is, we have a directed cycle) {\em and} if the partial Latin squares are disjoint, it is clear we have a Latin trade. 
(This is demonstrated in Example \ref{eggsy}: there is a green edge from the first column to the second column, a black edge from the second to the fourth column, another black edge from the fourth to the seventh column and finally a green edge from the seventh to the first column.)  This is true even if our directed cycle has one edge; i.e. is a blue loop. 

\begin{corollary}
Suppose there is a directed cycle $C$ in $G$ (with edges of any colour) and that the partial Latin squares associated with the edges of $C$ are pairwise disjoint.  
Then there is a Latin trade of size at most $2g+2bf(n)+2y(f(n)+1)$, where $g$, $b$ and $y$ are the number of green, black and blue edges, respectively, in the cycle $C$. 
\end{corollary}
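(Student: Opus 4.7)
The plan is to exhibit the required Latin trade as $T := \bigcup_{e \in C} P_e$, with candidate disjoint mate $T' := \bigcup_{e \in C} P'_e$. Since the $P_e$ are pairwise disjoint subsets of $L$ by hypothesis, and since $P_e \cap P'_e = \emptyset$ for each $e$ by the construction of mates, it follows easily that $T$ and $T'$ occupy the same set of cells of $L$, that $T \cap T' = \emptyset$, and that $|T| = \sum_{e \in C} |P_e|$.

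Row balance of $T$ with $T'$ is immediate from the row balance of each pair $(P_e, P'_e)$ provided by the preceding lemma. Column balance is the step where the cycle hypothesis enters. By the same lemma, in every column other than the initial or terminal vertex of $e$, the column multisets of $P_e$ and $P'_e$ already agree; at the initial vertex of $e$, $P_e$ has one extra $a$ and one fewer $b$ than $P'_e$, and at the terminal vertex of $e$, one extra $b$ and one fewer $a$. In a directed cycle every vertex $v$ is the initial of exactly one edge $e^+$ and the terminal of exactly one edge $e^-$, where $e^+ = e^-$ is allowed when $v$ is a loop vertex. Hence at column $v$ the $+a,-b$ discrepancy contributed by $e^+$ and the $+b,-a$ discrepancy contributed by $e^-$ cancel exactly, and $T$ agrees with $T'$ in every column. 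Together with the previous paragraph this shows that $T$ is a Latin trade with disjoint mate $T'$.

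For the size bound I would bound $|P_e|$ by colour class and sum. A green $P_e$ has size $2$; a black $P_e$ has size $2k$ for some $k \leq f(n)$, by the black-edge definition; and a blue $P_e$ has size at most $2(f(n)+1)$. Summing across the $g$ green, $b$ black and $y$ blue edges of $C$ then delivers $|T| \leq 2g + 2b f(n) + 2y(f(n)+1)$, which is the claimed bound. The only step that requires some care is the blue bound: naively the associated partial Latin square is the union of two green partial Latin squares (size $2$ each) and one black partial Latin square (size at most $2f(n)$), giving up to $2f(n)+4$ cells, but the two extremal cells of the black zig-zag are forced to coincide with cells of the two green partial Latin squares, because $a$ (resp.\ $b$) occupies a unique row of the relevant column of $L$. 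This saves exactly $2$ cells, yielding the desired $2(f(n)+1)$. I expect this endpoint-collision argument to be the only point worth spelling out in detail; everything else is bookkeeping built on the previous lemma.
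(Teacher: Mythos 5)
Your proposal is correct and follows the same route the paper intends: the paper presents this corollary as immediate from the preceding lemma, by taking the union of the associated partial Latin squares together with their mates and noting that the $\pm a,\mp b$ column discrepancies at initial and terminal vertices cancel around a directed cycle, exactly as you argue. Your endpoint-collision count for the blue case (the extremal cells $(r,c_3,a)$ and $(r',c_2,b)$ of the black zig-zag necessarily coincide with cells of the two green partial Latin squares, since $a$ and $b$ occur only once in those columns of $L$) correctly fills in the detail, left implicit in the paper, that gives $2(f(n)+1)$ rather than $2f(n)+4$ per blue edge.
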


We are almost there - we just have to consider the case when the associated partial Latin squares are not disjoint.

\begin{theorem}
Let $C$ be a directed cycle of minimum length in $G$.   
Then there is a Latin trade of size at most $2g+2bf(n)+2y(f(n)+1)$, where $g$, $b$ and $y$ are the number of green, black and blue edges, respectively, in the cycle $C$. 
\label{coloury}
\end{theorem}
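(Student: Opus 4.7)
The plan is to use the minimality of $C$ to deduce that the partial Latin squares $\{P_e : e \in C\}$ associated with the edges of $C$ are pairwise disjoint; the preceding corollary then immediately delivers a Latin trade of the stated size.

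I would argue by contradiction: suppose $(r,c,x) \in P_{e_i} \cap P_{e_j}$ for distinct edges $e_i, e_j$ of $C$. The key observation is that for each of the three edge colours the partial Latin square has a very rigid structure determined by its rows, its zig-zag of columns, and the symbols $a$ and $b$. In a green edge's partial Latin square the only symbols present are $a$ and $b$. In a black edge's $P_k(r',r'',w)$, the cell in row $r'$ at column $c_\ell$ carries the symbol $e_\ell$, while the cell in row $r''$ at column $c_\ell$ carries $e_{\ell+1}$; because $K>k$, the symbol $a$ appears in $P_k$ only at the initial column $w$. A blue edge inherits the same local description from its green--black--green decomposition. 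Combined with the uniqueness of each symbol in each row of $L$, the coincidence $(r,c,x)\in P_{e_i}\cap P_{e_j}$ pins down a precise geometric overlap of the zig-zags (or green pairs) of $e_i$ and $e_j$ at the vertex $c$.

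Using this overlap I would construct a directed cycle $C'$ in $G$ strictly shorter than $C$, contradicting minimality. The key tool is that $G$ already contains every ``truncation'' one might hope for: whenever the zig-zag of one edge reaches the symbol $b$ at some position $\ell$ with $\ell \leq f(n)$, the construction of $G$ automatically places a black edge from the initial vertex to $c_{\ell-1}$, and every row of $L$ supplying both $a$ and $b$ in a pair of columns yields a green edge. The shared cell $(r,c,x)$ certifies exactly the sort of coincidence which produces such a truncated edge of $G$; splicing this truncated edge into $C$ in place of (a portion of) $e_i$ or $e_j$ produces a shorter closed directed walk in $G$, which contains a directed cycle strictly shorter than $C$.

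The main obstacle will be the case analysis. Each of the six colour-pair possibilities for $\{e_i,e_j\}$, together with each admissible position of the shared cell within the respective zig-zag, yields a slightly different splicing construction; the blue case is particularly intricate since a blue edge's partial Latin square can contain more than one $a$-cell and more than one $b$-cell. In every case, however, the shared cell should identify an existing edge of $G$ that bypasses a proper portion of $C$. Once pairwise disjointness has been verified in every case, the preceding corollary yields a Latin trade of size at most $2g+2bf(n)+2y(f(n)+1)$, completing the proof.
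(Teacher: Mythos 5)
Your high-level plan --- use the minimality of $C$ to control intersections and then invoke the preceding corollary --- matches the paper's starting point, but there is a genuine gap: the entire content of the theorem is the case analysis you defer, and the splicing mechanism you propose for it does not exist in $G$ as defined. You claim that a shared cell ``certifies'' a truncated edge of $G$ that bypasses part of $C$. However, black edges only run from the column containing $a$ in one row to the column containing $b$ in another row; no edge of $G$ terminates at an intermediate column of a zig-zag, so a cell shared in the middle of two black (or blue) partial Latin squares does not by itself produce any edge to splice into $C$. The shortcuts that actually exist are green edges arising when the two intersecting configurations share a row: for instance, if the second black edge starts in the row $r_2$ of the first zig-zag (the case $r_3=r_2$, $r_1\neq r_4$), that row contains $a$ at the tail of the second edge and $b$ at the head of the first, giving a green edge which, together with a path of $C$, closes a cycle shorter than $C$. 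Identifying and exploiting this row-sharing is exactly what the paper's proof does, and your outline never isolates it; likewise you never use the elementary but essential fact that two distinct edges of a directed cycle cannot share an initial or a terminal vertex, which is what disposes of the cases $r_1=r_3$ or $r_2=r_4$.

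In addition, your blanket claim that minimality forces pairwise disjointness is stronger than what the paper establishes and leaves some cases unaccounted for. When the two black zig-zags (or the black parts of blue edges) lie in the same unordered pair of rows with the roles of the rows swapped ($r_1=r_4$, $r_2=r_3$), the paper does not derive a shorter cycle at all: the two pieces together complete a full row cycle, i.e.\ a Latin trade of size $2(k+\ell)\leq 4f(n)$, which satisfies the stated bound directly and ends the proof in that case. Your plan --- disjointness first, corollary second --- has no provision for this outcome; you would either have to admit this alternative conclusion (as the paper does) or prove the nontrivial extra claim that a shorter cycle is also available in these swapped-row configurations, and your sketch does neither. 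As written, the proposal is a programme whose one concrete mechanism is faulty and whose case analysis, which is the whole proof, is missing.
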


\begin{proof}
Let $C$ be a directed cycle in $G$ and consider only the partial Latin squares which are associated with edges of $G$. 
Firstly, if a green partial Latin square intersects either a black or blue partial Latin square, two directed edges have the same initial or terminal vertex, a contradiction. 

Next suppose two black partial Latin squares intersect; say 
$P_k(r_1,r_2,c)$ 
and $P_{\ell}(r_3,r_4,c')$. 
Then clearly $\{r_1,r_2\}\cap \{r_3,r_4\}\neq \emptyset$. 
If either $r_1=r_3$ or $r_2=r_4$ then two directed edges in $C$ start at the same vertex or terminate at the same vertex (respectively), a contradiction. 
Otherwise if  $r_1=r_4$ and $r_2=r_3$,  
then 
$P_k(r_1,r_2,c)\cup  
P_l(r_3,r_4,c')$ forms a row cycle Latin trade of size $2(k+\ell)$ and we are done. 

Next, suppose that $r_3=r_2$ and $r_1\neq r_4$ and 
the associated partial Latin squares intersect. (The case when $r_1=r_4$ and $r_2\neq r_3$ is equivalent.)
Removing the two black edges from $C$ creates two directed paths; let $D$ be the directed path which starts at $c_k$ (i.e. the final column of $P_k(r_1,r_2,c)$) and terminates at 
$c'$. However since $r_3=r_2$ there is a green edge from $c'$ to $c_k$; together these form a directed cycle with length shorter than $C$, a contradiction. 

Next suppose two 
blue partial Latin squares intersect.
Let the black partial Latin squares which are subsets of these blue partial Latin squares be 
$P_k(r_1,r_2,c)$ 
and $P_l(r_3,r_4,c')$, respectively. 
The cases $r_1=r_3$ or $r_2=r_4$ lead to contradictions as above. 
The case $r_1=r_4$ and $r_2=r_3$ implies a trade of size $2(k+\ell)$, similarly to above. 
This leaves the case $r_2=r_3$ and $r_1\neq r_4$ (equivalent to the 
case $r_1=r_4$ and $r_2\neq r_3$). Again, there is a green edge from $c'$ to $c_k$, which 
combined with a directed path from $C$, forms a directed cycle which is shorter than $C$.

Finally, suppose a black partial Latin square 
$P_k(r_1,r_2,c)$ 
intersects with a blue partial Latin square 
(containing the black partial Latin square $P_{\ell}(r_3,r_4,c')$). 
If $P_k(r_1,r_2,c)= 
P_{\ell}(r_3,r_4,c')$, then there is a green edge from $c_k$ to $c'$, which   
combined with a directed path from $C$ forms a directed cycle which is shorter than $C$.
Otherwise if $\{r_1,r_2\}=\{r_3,r_4\}$ there is a trade of size $2(k+\ell)$ within these rows. 
The cases $r_1=r_3$ or $r_2=r_4$ lead to contradictions as above. 
If $r_1=r_4$ and $r_2\neq r_3$, 
let $c''$ be the column of $P_{\ell}(r_3,r_4,c')$ which contains $b$ in row $r_4$.
Then there is a directed path in $C$ from $c''$ to $c$ 
and a green edge from $c$ to $c''$, creating a cycle shorter than $C$. 
Finally, if $r_2=r_3$ and $r_1\neq r_4$, 
let $c'''$ be the column of $P{\ell}(r_3,r_4,c')$ which contains $b$ in row $r_3$.
Then there is a directed path in $C$ from $c'''$ to $c'$ and a green edge from $c'$ to $c'''$, again causing a contradiction.  
\end{proof}

\section{An upper bound on the distance between Latin squares}

In this section we give a proof of Theorem \ref{main}. 
To ultimately show the existence of Latin trades, we first make some observations about drawing edges 
between parallel line paths without edges crossing. 
In what follows, for the sake of simplicity of explanation, 
we embed two directed paths $P$ and $Q$ (of length $p$ and $q$ respectively, where length is the number of vertices) in
the Euclidean plane so that $P$ lies entirely within the line $y=0$ and $Q$ lies entirely within the line $y=1$ 
and the vertices have integer coordinates with each edge directed from left to right. (Really we simply need $P$ and $Q$ to be drawn as parallel line segments, the above specificity avoids any ambiguity). 
The following lemma is easy to show for example by induction.  

\begin{lemma}
At most $p+q-1$ straight line segments can be drawn between vertices of $P$ and vertices of $Q$ without any edges crossing. 
\end{lemma}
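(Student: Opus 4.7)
The plan is to prove the lemma by induction on $p+q$, as is suggested in the paper. The base case $p=q=1$ is immediate: there is exactly one possible segment between the two vertices, and $p+q-1 = 1$.

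For the inductive step, consider any non-crossing collection of $m$ segments between $P$ and $Q$. By the symmetry between the two lines, I may assume the overall leftmost vertex lies on $P$; call it $u_1$. If $u_1$ is isolated (has no incident segment), delete it to obtain a non-crossing collection on $(p-1)+q$ vertices; the inductive hypothesis then yields $m \leq (p-1)+q-1 = p+q-2$ and we are done.

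Assume instead that $u_1$ has at least one incident segment, and let $v_j$ be its rightmost neighbour in $Q$. The key geometric observation is: any hypothetical segment $u_iv_k$ with $i\geq 2$ and $k<j$ must properly cross the segment $u_1v_j$. This is a short coordinate computation that exploits the fact that $u_1$ is (weakly) leftmost while $v_k$ lies strictly to the left of $v_j$, so the two segments have reversed left-right order at $y=0$ versus $y=1$ and must therefore meet in between. Consequently, none of $v_1,\ldots,v_{j-1}$ has any incident segment other than possibly one back to $u_1$. In particular every neighbour of $u_1$ lies in $\{v_1,\ldots,v_j\}$, so $\deg(u_1)\leq j$.

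Finally, delete $u_1$ together with $v_1,\ldots,v_{j-1}$. Since the removed $v_k$'s had no other incident segments, exactly $\deg(u_1)$ segments are removed, leaving a non-crossing drawing on $(p-1)+(q-j+1)$ vertices. Applying the inductive hypothesis and using $\deg(u_1)\leq j$,
\[
m \;\leq\; \deg(u_1) + \bigl[(p-1)+(q-j+1)-1\bigr] \;\leq\; j + (p+q-j-1) \;=\; p+q-1,
\]
completing the induction. The only real obstacle is the geometric crossing claim for $u_iv_k$ versus $u_1v_j$; once that is verified by parametrizing the two straight segments, the bookkeeping is routine.
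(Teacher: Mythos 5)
Your proof is correct and takes the route the paper itself gestures at (the paper gives no argument beyond saying the lemma is easy to show by induction): the key crossing claim does hold, since the two segments have reversed left--right order at $y=0$ versus $y=1$ and an intermediate value computation gives an interior intersection point, and your deletion bookkeeping $m\leq \deg(u_1)+\bigl[(p-1)+(q-j+1)-1\bigr]\leq p+q-1$ is right. The only point worth adding is the degenerate case in which deleting $u_1$ (and $v_1,\dots,v_{j-1}$) empties $P$, i.e.\ $p=1$: there every segment is incident with $u_1$, so $m\leq q=p+q-1$ immediately, and with that (or by allowing an empty path with zero segments) the induction is complete.
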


For our purposes we need something more specific. 

\begin{lemma}
Let $p,q\geq 3$. 
If more than $2(p+q-2)$ straight line segments are drawn between vertices of $P$ and vertices of $Q$, then 
there exists two edges which cross
such that the edges do not use vertices adjacent in $P$ or adjacent in $Q$. 
\end{lemma}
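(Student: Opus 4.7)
The plan is to prove the contrapositive: assuming that every pair of crossing edges uses vertices adjacent in $P$ or adjacent in $Q$, I will show that the number of edges is at most $2(p+q-2)$.

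First I set up coordinates. Label the vertices of $P$ left-to-right as $u_0,\dots,u_{p-1}$ and those of $Q$ as $v_0,\dots,v_{q-1}$, and identify each edge with the pair $(i,j)$ where $u_i,v_j$ are its endpoints. Two distinct edges $(i_1,j_1),(i_2,j_2)$ with $i_1 < i_2$ cross in their interiors precisely when $j_1 > j_2$; they use $P$-adjacent vertices iff $i_2 - i_1 = 1$, and $Q$-adjacent vertices iff $|j_1 - j_2| = 1$. The contrapositive hypothesis therefore becomes: for any two edges $(i_1,j_1),(i_2,j_2) \in E$ with $i_1 \leq i_2 - 2$, we must have $j_1 \leq j_2 + 1$.

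For each $i$ with $J_i := \{j : (i,j) \in E\}$ nonempty, set $\ell_i = \min J_i$ and $r_i = \max J_i$, and enumerate $I = \{i : J_i \neq \emptyset\} = \{i_1 < i_2 < \cdots < i_k\}$. Applying the hypothesis to the specific pair of edges $(i_s, r_{i_s})$ and $(i_{s+2}, \ell_{i_{s+2}})$---whose $P$-indices differ by at least $2$ since the $i_s$ are strictly increasing integers---yields the key inequality $r_{i_s} \leq \ell_{i_{s+2}} + 1$ for every $1 \leq s \leq k-2$.

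To finish, I estimate $|E| \leq \sum_{i \in I}(r_i - \ell_i + 1) = k + \sum_{s=1}^{k}(r_{i_s} - \ell_{i_s})$ and rearrange the second sum by pairing each $r_{i_s}$ with $\ell_{i_{s+2}}$:
\begin{align*}
\sum_{s=1}^{k}(r_{i_s} - \ell_{i_s})
&= \sum_{s=1}^{k-2}(r_{i_s} - \ell_{i_{s+2}}) + r_{i_{k-1}} + r_{i_k} - \ell_{i_1} - \ell_{i_2} \\
&\leq (k-2) + 2(q-1).
\end{align*}
Hence $|E| \leq 2k + 2q - 4 \leq 2(p+q-2)$, contradicting $|E| > 2(p+q-2)$. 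The corner cases $k \leq 2$ are immediate (the empty pairing sum just yields $|E| \leq 2q \leq 2(p+q-2)$ since $p \geq 3$). I do not expect a serious obstacle: the only subtlety is that $J_i$ need not be an interval, which is precisely why we work with the interval endpoints $\ell_i, r_i$ rather than with $J_i$ itself.
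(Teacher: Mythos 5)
Your proof is correct, but it takes a genuinely different route from the paper. The paper deduces the statement from the preceding lemma (at most $p+q-1$ non-crossing segments between two parallel paths) via a parity trick: properly $2$-colour the vertices of $P$ and of $Q$, note that any two crossing segments whose $P$-endpoints have the same colour and whose $Q$-endpoints have the same colour are automatically non-adjacent at both ends, and then sum the non-crossing bound over the four colour-class pairs to get $2(p+q-2)$. You instead prove the contrapositive directly: translating ``every crossing uses adjacent endpoints'' into the inequality $j_1\leq j_2+1$ whenever $i_1\leq i_2-2$, extracting the key relation $r_{i_s}\leq \ell_{i_{s+2}}+1$ between interval endpoints of the neighbourhoods $J_i$, and closing with a telescoping rearrangement that yields $|E|\leq 2k+2q-4\leq 2(p+q-2)$; all steps (the crossing criterion $i_1<i_2$, $j_1>j_2$, the identity rewriting $\sum(r_{i_s}-\ell_{i_s})$, the bounds $r\leq q-1$, $\ell\geq 0$, $k\leq p$, and the corner cases $k\leq 2$) check out. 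What your approach buys: it is self-contained (it does not lean on the earlier lemma, which the paper leaves as ``easy to show by induction''), it gives the marginally sharper bound $2k+2q-4$ in terms of the number $k$ of $P$-vertices actually used, and it works already for $p,q\geq 2$. What the paper's approach buys: given the non-crossing lemma, it is a two-line argument, and the colouring idea makes transparent \emph{why} the doubled bound $2(p+q-2)$ appears (four colour classes, each contributing its own planar bound).
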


\begin{proof}
Properly colour the vertices of $P$ with colours $c_1$ and $c_2$ and the vertices of $Q$ with colours $c_1'$ and $c_2'$.
(That is, within each path vertices of the same colour are not adjacent.) 
Consider the line segments between vertices of colours $c_i$ and $c_j'$ for some fixed $i$ and $j$, 
$i,j\in\{1,2\}$. If two such edges cross, these edges do not use vertices adjacent in $P$ nor do they use vertices adjacent in $Q$. 
Since there are four choices for $i$ and $j$, from the previous lemma 
we can draw at most $2(p+q-2)$ line segments without any edges crossing. 
\end{proof}

We now explain why we need the previous lemmas.

\begin{lemma}
Let $P$ and $Q$ be directed paths of green edges embedded in the Euclidean plane as above, where $p\geq 3$ and $q\geq 3$. 
If there exists more than $2(p+q-2)$ black edges between vertices of $P$ and vertices of $Q$, 
then there exists a directed cycle in $G$ on the vertices of $P$ and $Q$ such that
the number of edges which are either black or blue is at most $2$.
\label{kross} 
\end{lemma}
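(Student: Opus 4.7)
The plan is to apply the previous lemma to the collection of black edges joining $P$ and $Q$: since there are more than $2(p+q-2)$ such edges, we obtain two crossing black edges $e_1, e_2$ whose $P$-endpoints are not $P$-adjacent and whose $Q$-endpoints are not $Q$-adjacent. Label the endpoints $p_i, p_k \in V(P)$ and $q_j, q_l \in V(Q)$, choosing (after possibly swapping the two edges) $i < k$; then non-adjacency gives $k \geq i+2$ and $|j-l| \geq 2$, and the fact that the two segments between the parallel lines must cross forces $l < j$, hence $j \geq l+2$.

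Each of $e_1, e_2$ is directed either $P \to Q$ or $Q \to P$, giving, up to interchanging the two edges, three cases. In the \emph{parallel-direction} case with $e_1\colon p_i \to q_j$ and $e_2\colon p_k \to q_l$ (both oriented $P \to Q$), I use the black edge $e_2$ together with the blue edge $q_{j-1} \to p_{i+1}$ associated to $e_1$. This blue edge exists by the definition in Section 2: the green edges $q_{j-1} \to q_j$ and $p_i \to p_{i+1}$ are present in $Q$ and $P$ respectively, and $q_{j-1} \neq p_{i+1}$ by the vertex-disjointness of $P$ and $Q$ under the embedding. One obtains the directed cycle
\[
p_{i+1} \to p_{i+2} \to \cdots \to p_k \xrightarrow{e_2} q_l \to q_{l+1} \to \cdots \to q_{j-1} \to p_{i+1}
\]
in $G$, containing exactly one black and one blue edge; the case where both edges are directed $Q \to P$ is entirely symmetric.

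The main obstacle is the \emph{opposite-direction} case, $e_1\colon p_i \to q_j$ and $e_2\colon q_l \to p_k$, in which no directed cycle using $e_1$ or $e_2$ directly together with green edges of $P$ and $Q$ respects all orientations (the two black edges now ``point outwards'' with respect to any natural return path). To handle this I replace \emph{both} black edges by their blue counterparts: the blue edge $q_{j-1} \to p_{i+1}$ derived from $e_1$, and the blue edge $p_{k-1} \to q_{l+1}$ derived from $e_2$. Joining them with the green sub-paths of $P$ and $Q$ produces
\[
p_{i+1} \to \cdots \to p_{k-1} \to q_{l+1} \to \cdots \to q_{j-1} \to p_{i+1},
\]
a directed cycle in $G$ with two blue edges and no black edges. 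In all three cases the bounds $k \geq i+2$ and $j \geq l+2$ keep each of the indices $i+1, k-1, l+1, j-1$ in range, so every blue edge used is well defined and every green sub-path is non-empty; vertex-disjointness of $P$ and $Q$ ensures the closed walks are genuine directed cycles in $G$, and in each case at most two of their edges are black or blue, as required.
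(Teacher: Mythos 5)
Your argument follows the paper's own proof: apply the preceding crossing lemma to obtain two crossing black edges whose endpoints are non-adjacent on $P$ and non-adjacent on $Q$, then, for each black edge oriented the ``wrong'' way, replace it by the blue edge it induces, and close the cycle with green subpaths of $P$ and $Q$. The constructions you give are correct, and the bounds $k\geq i+2$, $j\geq l+2$ do keep every blue edge and every green subpath inside $P\cup Q$ as you claim. The one flaw is the assertion that, up to interchanging the two edges, there are only three orientation cases. After normalising $i<k$ (crossing then forces $l<j$), the two mixed-orientation patterns are \emph{not} related by relabelling: besides your case $p_i\to q_j$, $q_l\to p_k$ (both black edges pointing ``outward'', which you handle with two blue edges), there is the pattern $q_j\to p_i$, $p_k\to q_l$, which your two-blue construction does not cover --- it would require the vertices $p_{i-1}$ and $q_{j+1}$, which need not lie on the paths. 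Fortunately the omitted pattern is the easy one (it is the first case in the paper's proof): both black edges already point the right way, and $p_i\to\cdots\to p_k\to q_l\to\cdots\to q_j\to p_i$ is a directed cycle with two black edges and no blue ones. Adding that sentence makes your case analysis exhaustive and the proof complete.
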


\begin{proof}
Let the vertices of $P$ be $1,2,\dots ,p$ and let the vertices of $Q$ be $1'$, $2',\dots ,q'$ where 
each directed edge is from $i$ to $i+1$ in $P$ or from $i'$ to $(i+1)'$ in $Q$ for some $i$.
From the previous lemma, there exists a 
black edge on vertices $i$ and $(j+\ell)'$ and a black edge on vertices $j'$ and $i+k$ where $k,\ell\geq 2$.
If these black edges are directed from $(j+\ell)'$ to $i$ and from $i+k$ to $j'$, respectively, we are done.
If there is a black edge from $i$ to $(j+\ell)'$, then by definition there is a blue edge from 
$(j+\ell-1)'$ to $i+1$. 
If there is a black edge from $j'$ to $(i+k)$, then by definition there is a blue edge from 
$i+k-1$ to $(j+1)'$. 
In any case, we can construct the required directed cycle. 
\end{proof}

\begin{theorem}
For $n\geq 2$, each Latin square contains a Latin trade of size at most $8\sqrt{n}$.
\end{theorem}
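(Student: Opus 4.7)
The plan is to argue by contradiction: set $f(n)=\sqrt{n}$ and suppose $L$ admits no Latin trade of size at most $8\sqrt{n}$. Since in particular there is no trade of size at most $2f(n)=2\sqrt{n}$, Corollary \ref{blacky} supplies distinct symbols $a,b$ such that the digraph $G=G_{L,a,b,f}$ contains at least $n(\sqrt{n}-1)$ black edges. If any green directed cycle in $G$ has length at most $4\sqrt{n}$, then Lemma \ref{greeny} already gives a Latin trade of size at most $8\sqrt{n}$; so we may assume every green cycle has length strictly more than $4\sqrt{n}$, forcing fewer than $\sqrt{n}/4$ green cycles.

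Now partition each green cycle into disjoint directed subpaths of length $\sqrt{n}$ (assuming for simplicity that $\sqrt{n}$ is an integer; the general case is handled with minor adjustments), producing exactly $\sqrt{n}$ paths in total. The aim is to find an ordered pair of paths $(P,Q)$ with strictly more than $2(p+q-2)=4\sqrt{n}-4$ black edges between them; once found, Lemma \ref{kross} yields a directed cycle supported on $V(P)\cup V(Q)$ with at most two non-green edges and hence at most $2(\sqrt{n}-1)$ green edges. Plugging $g\le 2\sqrt{n}-2$, $b+y\le 2$ into Theorem \ref{coloury} then gives a Latin trade of size at most $2(2\sqrt{n}-2)+2\cdot 2(\sqrt{n}+1)=8\sqrt{n}$, contradicting the assumption.

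The critical step, and the main obstacle, is establishing the existence of such a pair $(P,Q)$. One builds this up by considering black edges according to where their endpoints lie. First, if some black edge has both endpoints within the same subpath $P$, then it lies on a single green cycle with forward distance at most $\sqrt{n}-1$; the blue edge it induces produces a directed cycle of one blue and at most $\sqrt{n}-2$ green edges, giving by Theorem \ref{coloury} a trade of size at most $2(\sqrt{n}+1)+2(\sqrt{n}-2)=4\sqrt{n}-2$, already a contradiction. So one may assume each black edge goes between two distinct paths, after which a careful averaging/pigeonholing argument — taking into account that all green cycles are long, that the number of cycles (and hence paths) is small, and that each remaining black edge contributes to many path pairs when one sweeps over all possible length-$\sqrt{n}$ windows — produces a pair of paths with more than $4\sqrt{n}-4$ black edges between them.

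The technical heart of this last step is that naive averaging over a partition into $\sqrt{n}$ disjoint paths gives average only $\sqrt{n}$ edges per pair, while the threshold $4\sqrt{n}-4$ for Lemma \ref{kross} is a factor of roughly four larger. Closing this gap is the principal difficulty; it is handled by (i) allowing $P$ and $Q$ to range over \emph{all} length-$\sqrt{n}$ windows (so that each black edge is counted in $\sqrt{n}\cdot\sqrt{n}=n$ ordered pairs), (ii) using the ``short-span'' within-cycle observation above to excise edges that would otherwise inflate internal pair-counts without yielding a usable crossing, and (iii) invoking the fact that distinct directed cycles in $G$ must arise from the structure guaranteed by Lemmas~3.1 and~3.2 (the line-path non-crossing bounds) applied with $p=q=\sqrt{n}$. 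Once the averaging is honestly pushed through, Lemma \ref{kross} concludes the proof as described above.
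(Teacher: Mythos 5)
Your skeleton is the paper's: pick $a,b$ via Corollary \ref{blacky}, cut the green $2$-factor into short directed paths, locate a pair of paths joined by many black edges, and finish with Lemma \ref{kross} and Theorem \ref{coloury}; your endgame arithmetic ($g\le 2\sqrt{n}-2$, at most two black/blue edges, size at most $2(2\sqrt{n}-2)+4(\sqrt{n}+1)=8\sqrt{n}$) and your treatment of a black edge inside a single path are fine. The genuine gap is exactly the step you flag as the ``principal difficulty,'' and none of your three remedies closes it. With $f(n)=\sqrt{n}$ you are guaranteed only about $n^{3/2}$ black edges, while your partition has about $\sqrt{n}$ paths and hence about $n/2$ unordered pairs; since Lemma \ref{kross} counts black edges between a pair in both directions, the average load per pair is about $2\sqrt{n}$, a factor of two below the threshold $4\sqrt{n}-4$, so no pigeonhole argument can force a dense pair. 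Remedy (i), sliding the window pairs, does not change this density: each black edge then lies in roughly $n$ ordered window pairs, but the number of such pairs grows to roughly $n^{2}$, so the average is still about $\sqrt{n}$ per ordered pair ($2\sqrt{n}$ unordered). Remedy (ii) only excises within-path edges (which the paper also does), and remedy (iii) is not an argument. Moreover, because your parameters already give exactly $8\sqrt{n}$ in the final bound, there is no slack left to repair the count by enlarging anything.

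The paper closes the gap by decoupling the two scales instead of tying both to $\sqrt{n}$: it assumes no trade of size at most $2b\sqrt{n}$ with $b=4$ (so green cycles are longer than $4\sqrt{n}$), takes $f(n)=\lceil d\sqrt{n}\rceil+1$ with $d=19/6$, so that Corollary \ref{blacky} supplies at least $dn^{3/2}\approx 3.17\,n^{3/2}$ black edges, and cuts the green cycles into paths of at most $k\sqrt{n}$ vertices with $k=4/3$. Then there are at most $\sqrt{n}(b+k)/(bk)=\sqrt{n}$ paths, about $n/2$ pairs, and a per-pair threshold of about $4k\sqrt{n}=(16/3)\sqrt{n}$; if no pair were dense the total number of black edges would be at most about $(8/3)n^{3/2}<dn^{3/2}$, the desired contradiction, with separate (easy) bounds for paths of at most two vertices and for within-path edges, and with the constants tuned so that the trade extracted from the resulting cycle still has size at most $8\sqrt{n}$ for $n\ge 16$ (small $n$ being covered by the trivial $2n$ bound). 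In short: your approach is the right one, but the dense-pair existence must be bought by re-tuning $f(n)$ and the path length as in the paper, not by the averaging devices you sketch.
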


\begin{proof}
If $n< 16$, $8\sqrt{n}> 2n$ and since any Latin square of order $n\geq 2$ has a Latin trade of size $2n$ we may assume henceforth that $n\geq 16$.
Let $b=4$, $k=4/3$ and $d=19/6$. 
Suppose, for the sake of contradiction, there exists a Latin square $L$ of order $n$ such that every Latin trade 
in $L$ has size greater than $2b\sqrt{n}$. 
We consider the directed coloured graph $G=G_{L,a,b,f(n)}$ as defined in the previous section
where $f(n)=\lceil d\sqrt{n}\rceil +1$. 
From Lemma \ref{greeny}, each directed green cycle in $G$ has length greater than $b\sqrt{n}\geq 16$. 
We now partition the green edges into directed paths so that each path has length (number of vertices) at most $k\sqrt{n}$. It is clear that we can minimize the number of such paths by ensuring that each cycle contributes at most one path of length less than $\lfloor k\sqrt{n}\rfloor$.

Since each cycle has length greater than $b\sqrt{n}$, the number of directed green cycles is less than $\sqrt{n}/b$. 
Thus the number of paths  of length less than $\lfloor k\sqrt{n}\rfloor$ is at most $\sqrt{n}/b$. 
Also the total number of paths of length equal to $\lfloor k\sqrt{n}\rfloor$ is at most $\sqrt{n}/k$. 
Thus the total number of paths of length in our partition of the green edges is at most $\sqrt{n}(b+k)/bk$. 
In turn, the total number of pairs of paths is less than $n(b+k)^2/2b^2k^2$. 

By Corollary \ref{blacky}, $G$ has at least $dn^{3/2}$ black edges.  
Suppose there are at least $4k\sqrt{n}$ black edges between 
two of the paths of length at least $3$ in the partition. Then by Lemma \ref{kross}, there is 
a directed cycle in $G$ using at most $2k\sqrt{n}$ green edges and at most $2$ edges which are either black or blue. 
Thus by Theorem \ref{coloury} there is a Latin trade of size at most 
$2(k-1)\sqrt{n}+2d\sqrt{n}+4=2\sqrt{n}(k+d-1)+4$. 
Since $b= k+d-1/2$ and $n\geq 16$, we are done in this case. 

Thus there are less than $4k\sqrt{n}$ black edges between each pair of directed green paths of length at least $3$. 
There are also at most $4k\sqrt{n}$ black edges between a path of length at most $2$ and any other path. 
If there is a black edge using two vertices from the same directed green path, then
 we create a directed cycle with at most $k\sqrt{n}$ edges and thus we are done.

Thus the total number of black edges is less than  
$4k\sqrt{n}$ times the number of pairs of directed paths. 
Given the above lower bound of $dn^{3/2}$ on the number of black edges, we have:
$$\frac{2(b+k)^2}{b^2k}\geq d,$$
a contradiction given the above values of $b$, $d$ and $k$. 
\end{proof}

\section{A lower bound on the size of a defining set}

In this section we give a proof of Theorem \ref{main3}. To that end, it suffices to prove the following.

\begin{theorem}
The size of the defining set of any Latin square of order $n$ is $\Omega(n^{3/2})$. 
\label{definy}
\end{theorem}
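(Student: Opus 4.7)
The plan is to deduce Theorem \ref{definy} from Lemma \ref{easy} via a hypergraph covering argument: exhibit a large family $\mathcal{T}$ of small Latin trades in $L$ whose incidence structure forces any hitting set $D$ to be large. Specifically, I would aim for $|\mathcal{T}| = \Omega(n^2)$ Latin trades, each of size $O(\sqrt{n})$, with the property that every cell of $L$ belongs to at most $O(\sqrt{n})$ members of $\mathcal{T}$. Since the total cell--trade incidence count is then $O(n^2 \cdot \sqrt{n}) = O(n^{5/2})$, a double-counting argument yields $|D| \geq |\mathcal{T}| / O(\sqrt{n}) = \Omega(n^{3/2})$.

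To assemble $\mathcal{T}$, I would re-use the apparatus of Section 2. Set $f(n) = \lceil d\sqrt{n}\rceil + 1$ as in the proof of Theorem \ref{main}. For each ordered pair $(a,b)$ of distinct symbols, consider the digraph $G_{L,a,b,f}$. The lemma preceding Corollary \ref{blacky} tells us that either $L$ already contains a trade of size at most $2f(n)$, or the total number of black edges across all such digraphs is at least $n^2(n-1)(f(n)-1) = \Omega(n^{7/2})$. In the latter case, a constant fraction of pairs $(a,b)$ have $G_{L,a,b,f}$ containing $\Omega(n^{3/2})$ black edges, and the path-partitioning argument inside the proof of Theorem \ref{main} then extracts from each such graph a short directed cycle whose associated trade (via Theorem \ref{coloury}) has size at most $8\sqrt{n}$. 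Adding one such trade per pair gives $|\mathcal{T}| = \Omega(n^2)$ trades of size $O(\sqrt{n})$. The case where a short trade already exists is handled by an iterative variant in which, having built up some trades in $\mathcal{T}$, one still applies the construction and incorporates additional small trades so long as the bookkeeping on overlaps is maintained.

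The main obstacle is the multiplicity bound: a first-moment calculation immediately yields $O(\sqrt{n})$ on \emph{average}, but the covering argument needs a worst-case bound. I expect to prove this by analysing the structural constraints on $(a,b)$ that force the trade $T_{a,b}$ to visit a specific cell $(r,c,e)$: such a trade arises from a short directed cycle whose edges encode a sequence of row cycles relative to $(a,b)$ passing through $(r,c,e)$, and a careful enumeration analogous to the multiplicity counts sketched in Section 2 (where each cell lies in $O(f^2 n)$ associated partial Latin squares of black edges across all pairs) should localise the responsible $(a,b)$ to an $O(\sqrt{n})$-sized set. If this direct argument fails, a fallback would be to exploit that each viable $(a,b)$ in fact yields many candidate short cycles (since the counting in the proof of Theorem \ref{main} has significant slack), and to apply a greedy or probabilistic selection of one $T_{a,b}$ per pair that minimises overlap; this is the step that I expect to require the most care, because the flexibility must be made quantitative enough to force a uniform rather than merely average multiplicity bound.
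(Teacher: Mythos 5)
Your overall strategy is genuinely different from the paper's, and it contains a gap that you yourself flag but do not close: the worst-case multiplicity bound. Choosing one trade $T_{a,b}$ of size $O(\sqrt n)$ per ordered pair of symbols gives the right incidence count only on average; for the double-counting step $|D|\ge |\mathcal T|/\max_{\mathrm{cell}}(\mathrm{multiplicity})$ you need that no single cell (indeed, no small set of cells) lies in too many of the selected trades, and nothing in the Section~2 machinery provides this. The trades produced by Theorem~\ref{coloury} are not symbol-cycle trades: the partial Latin squares attached to black and blue edges contain the intermediate entries $e_1,e_2,\dots$ of the zig-zag sequences, so a fixed cell $(r,c,e)$ can occur in $T_{a,b}$ for pairs $(a,b)$ with $a,b\neq e$, and there is no evident way to ``localise the responsible $(a,b)$ to an $O(\sqrt n)$-sized set.'' The greedy/probabilistic fallback is likewise unsubstantiated: the paper's closing remark exhibits a digraph with $\Omega(n^{3/2})$ black edges and no directed cycle at all, which shows that an abundance of black edges does not by itself supply the many alternative short cycles whose flexibility you would need to spread the trades out. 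Finally, the ``iterative variant'' for the branch where a trade of size $\le 2f(n)$ already exists is only a sentence; one small trade forces $|D|\ge 1$, not $\Omega(n^{3/2})$, and it is unclear how to iterate while keeping the overlap bookkeeping you require.

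The paper sidesteps multiplicity entirely by folding the defining set into the construction. Assuming $|D|\le cn^{3/2}$ with $c<1/\sqrt{40}$, it builds the digraphs $G_{L,a,b,n}$ using only edges whose associated partial Latin squares avoid $D$; then any directed cycle yields a Latin trade disjoint from $D$, contradicting Lemma~\ref{easy}, so all these digraphs may be assumed acyclic. A lower bound on the total number of black edges comes from the fact that in every pair of rows each row cycle is itself a trade and hence must contain a cell of $D$: few $D$-cells force long $D$-free blocks, and each block of length $b_i$ contributes $2\binom{b_i}{2}$ black edges, giving (after convexity) a bound of order $n^{7/2}/c$. An upper bound of order $c\,n^{7/2}$ follows from acyclicity, by bounding black edges against the green paths in each $G_{L,a,b,n}$. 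For $c<1/\sqrt{40}$ these bounds clash, finishing the proof. If you want to salvage your covering approach you would have to prove a genuine ``spread-out'' property of a trade family — precisely the step your proposal leaves open — whereas the paper's conditioning-on-$D$ argument makes that unnecessary.
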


\begin{proof}
Suppose for the sake of contradiction, there exists a Latin square $L$ of order $n$ with a defining set $D'$ such that 
$|D'|\leq cn^{3/2}$ where $0<c<1/\sqrt{40}$. If we can show that $L\setminus D'$ contains a Latin trade, we are done by Lemma \ref{easy}.
It thus suffices to show that $L\setminus D$ contains a Latin trade for any $D$ such that $D'\subseteq D\subset L$ and 
$|D|=\lceil cn^{3/2}\rceil$. 
We define graphs $G_{L,a,b,f(n)=n}$ with coloured edges as in Section 2, with the proviso that only edges corresponding to partial Latin squares which do {\em not} include elements of $D$ are included. 
By Theorem \ref{coloury} it suffices to show that the graph $G$ contains a coloured cycle for some ordered pair of symbols $(a,b)$.  
(We set $f(n)=n$ as we simply need to show the existence of a Latin trade; its size to us is irrelevant.) 
We let ${\mathcal B}$ be the total number of black edges in any graph of the form $G_{L,a,b,n}$.  

%; that is 
%$${\mathcal G}=\bigcup_{a\neq b} G_{L,a,b,n}.$$ 
%Note that while each graph $G_{L,a,b,n}$ is simple, the graph ${\mathcal G}$ may have parallel edges. 

We first obtain a lower bound for ${\mathcal B}$. 
Let $x_i$ be the number of elements in row $i$ of $D$. 
Then $\sum_{i=0}^{n-1} x_i= \lceil cn^{3/2}\rceil$.   
Consider the first two rows of $L$. Let $k=x_0+x_1$. 
The elements in these rows partition into row-cycles (or possibly just one large row-cycle), as in Lemma \ref{rowcycle}. 
 Rearrange the columns so that any columns in the same row-cycle form a consecutive set of integers. 
Since each row cycle gives a Latin trade, each row cycle contains an element of $D$. Rearrange the columns further so that the first column in each row-cycle contains an element of $D$. Finally rearrange the columns within each trade so that the element in 
cell $(1,c)$ is the same as the element in cell $(0,c+1)$, unless $c+1$ belongs to a different row-cycle to $c$. 

Let $c_1$, $c_2,\dots ,c_e$ be the columns containing elements of $D$ where $k/2\leq e\leq k$. (The lower bound arises in the extreme case that each column contains two elements from $D$ in rows $1$ and $2$). For each $1\leq i< e$ we define block $B_i$ to be the set of columns strictly between $c_i$ and $c_{i+1}$ (not including $c_i$ and $c_{i+1}$). Let $b_i=c_{i+1}-c_i-1$ and $b_e=(n-1)-c_e$. Observe that $\sum_{i=1}^e b_i= n-e\geq n-k$. 

We illustrate the above notation in the example below. Elements of $D$ are given in bold and underlined; $x_0=x_1=2$, $k=4$, $e=3$, $c_1=0$, $c_2=3$, $c_3=5$, $b_1=2$, $b_2=1$ and $b_3=4$. 
$$\begin{array}{|c|c|c|c|c|c|c|c|c|c|}
\hline
{\bf \underline{0}} & 7 & 2 & 3 & 4 & {\bf \underline{5}} & 6 & 1 & 8 & 9 \\ 
\hline
7 & 2 & 0 & {\bf \underline{4}} & 5 & {\bf \underline{6}} & 1 & 8 & 9 & 3 \\
\hline
\end{array}$$

Observe that any pair of distinct columns within a block gives rise to two black edges. 
For example, in the above the columns in $B_1$ give rise to an edge from the second to the third column in 
$G_{L,7,0,n}$ and an edge from the third to the second column in $G_{L,0,7,n}$.   
Thus the number of black edges arising from rows $0$ and $1$ is equal to
\begin{eqnarray*}
2\sum_{i=1}^e \binom{b_i}{2} & = & e-n+ \sum_{i=1}^e b_i^2  \\
& \geq &  e-n+ e\left((n-e)/e\right)^2 \\
& = & 2e - 3n + n^2/e \\
& \geq & k - 3n  + n^2/k \\
& = & (x_0+x_1) - 3n + \frac{n^2}{(x_0+x_1)}.
\end{eqnarray*}

So, considering all pairs of rows,
$${\mathcal B}\geq c(n-1)n^{3/2} -\frac{3}{2}n^2(n-1)+  n^2\sum_{i< j} \frac{1}{x_i+x_j}.$$  
The last sum in the above expression is minimized when all $x_i$'s are equal, i.e., $x_i = \lceil cn^{3/2}\rceil/n \le cn^{1/2} + 1$, so:
\begin{equation}\label{blackylb}
{\mathcal B}\geq c(n-1)n^{3/2} -\frac{3}{2}n^2(n-1)+  \frac{n^{3}(n-1)}{4(cn^{1/2} + 1)}.
\end{equation}
   
Next we calculate an upper bound for ${\mathcal B}$; 
if this is less than the previous expression we are done. 
Let $z_i$ be the number of times symbol $i$ appears in $D$. 
Then clearly $\sum_{i=0}^{n-1} z_i =\lceil cn^{3/2}\rceil$.

Consider the green edges in $G_{L,0,1,n}$. Let $z_0+z_1=\ell$. 
Let $m$ be the number of green edges missing in the graph; observe that $\ell/2\leq m\leq \ell$.
 We know that there are no directed cycles of green edges, otherwise we will be done by Lemma \ref{greeny}.
 Thus there are $m$ green paths; let their lengths be: $g_1$, $g_2,\dots ,g_{m}$.

%Suppose first that $m\geq 2$ and at least two of the green paths have length at least $3$. 
There are less than $3m\leq 3\ell$ vertices not belonging to a green path of length at most $3$. 
Thus there are less than $3n\ell$ black edges which are not incident with a green path of length at least $3$.
 (Note we must avoid directed cycles of black edges of length $2$, so each pair of vertices has at most one directed black edge.)
Next, there are less than  $2\sum_{i < j}(g_i+g_j)$ black edges between green paths of length at least $3$, otherwise 
there exists a directed cycle (and thus from Theorem \ref{kross} a Latin trade).

In total, the number of black edges 
in $G_{L,0,1,n}$ is less than: 
\begin{align*}
3n\ell +2\sum_{i<j}{(g_i + g_j)}  &\leq  3n\ell+2(m-1)(n-m) \\
 &\leq 3n\ell+2(\ell-1)(n-\ell/2) \\
 &< 5n\ell+\ell = (z_0+z_1)(5n+1). 
\end{align*}

Hence summing over all pairs of symbols, 
\begin{align*}
{\mathcal B}\leq (5n+1)\sum_{i\neq j}(z_i+z_j) &=   2(n-1)(5n+1)\sum_i z_i\\
&  \leq  2(n-1)(5n+1)(cn^{3/2}+1).
\end{align*}
 However, combining this upper bound for ${\mathcal B}$ with the lower bound in \eqref{blackylb} creates a contradiction for large enough $n$.
\end{proof}

We remark that the bound in the previous theorem is asymptotic. For small orders greater than $8$, linear bounds (e.g. \cite{FFR}, \cite{HAF}) are still the best known. 
To give an idea of the limitations of our methods in the main results of this paper, consider the following graph $G$ on vertex set $N(n=m^2)$. For each $k\in N(m)$, let $G_k=[km,km+1,\dots ,k(m+1)-1]$ be a directed green path in $G$. Add a directed black edge from $i$ to $j$ whenever $i-j$ is divisible by $m$ and $i<j$. Such a graph has $\Omega(n^{3/2})$ black edges yet is free of directed cycles.


\begin{thebibliography}{99}

%\bibitem{AnHi} L.D.\ Anderson and A.J.W.\ Hilton, Generalized latin rectangles. II. Embedding, {\em Discrete Math.} {\bf 31} %(1980), 235--260.

\bibitem{Be} R.\ Bean, The size of the smallest uniquely completable set in order 8 latin
squares, {\em J. Combin. Math. Combin. Comput.}  {\bf 52} (2005), 159--168.

\bibitem{Cav3} N.J.\ Cavenagh, A superlinear lower bound for the size of a critical set in a Latin square, {\em J. Combin. Designs} {\bf 15} (2007), 269--282. 

\bibitem{Cavenagh} N.J.\ Cavenagh, The theory and application of latin bitrades: a survey,
{\it Math. Slovac.}, {\bf 58} (2008), 1--28. 

\bibitem{Cav2} N.J.\ Cavenagh, The size of the smallest Latin trade in the back circulant Latin square, {\em Bull. Inst. Combin. Appl.} {\bf 38} (2003), 11--18. 

\bibitem{Cavthesis} N.J.\ Cavenagh, Latin trades and critical sets in Latin squares, PhD thesis, University of Queensland, 2003. 

\bibitem{CDS} J.\ Cooper, D.\ Donovan and J.\ Seberry, Latin squares and critical sets of minimal
size, {\em Australasian J. Combin.} {\bf 4} (1991), 113--120.

\bibitem{DC} D. Donovan and J. Cooper, Critical sets in back circulant latin squares, {\em Aequationes
Math.}  {\bf 52} (1996), 157--179.

\bibitem{Dr1} A.\ Dr\'{a}pal, Hamming distance of groups and quasi-groups, 
{\em Discrete Math.} {\bf 235} (2001), 189--197. 

\bibitem{DrKe} A.\  Dr\'{a}pal and T.\ Kepka, On a distance of groups and Latin squares, {\em Comment. Math. Univ. Carolin.} {\bf 30} (1989), 621--626. 

\bibitem{FFR} C-M.\ Fu, H-L.\ Fu and C.A.\ Rodger, The minimum size of critical sets in latin squares, {\it J. Statist. Plann. Inference} {\bf 62} (1997), 333--337.

\bibitem{HAF} P.\ Horak, R.E.L.\ Aldred and H.\ Fleischner, Completing latin squares: critical sets, {\it J. Combin. Designs.} {\bf 10} (2002), 419--432.


%\bibitem{Ev} T.\ Evans, Embedding incomplete latin squares, {\it Amer. Math. Monthly} {\bf 67} (1960), 958--961.

%\bibitem{GRC} M.\ Grieg and C.J.\ Colbourn, Orthogonal arrays of index more than one, in: Handbook of Combinatorial Designs, %Second Edition, CRC Press, Boca Raton, FL., 2006, 219--223. 

%\bibitem{KhMaMo} A.A.\ Khanban, M.\ Mahdian and E.S.\ Mahmoodian, A linear algebraic approach to orthogonal arrays and latin %squares, {\em Ars Combin.} (to appear). 


%\bibitem{McMeMy} B.D.\ McKay, A.\ Meynert and W.\ Myrvold, Small latin squares, quasigroups and loops, {\it J. Combin. Designs.} %{\bf 15} (2007), 98--119.

\bibitem{McW} 
B.D.\ McKay and I.M.\ Wanless, Most Latin squares have many subsquares, {\it J. Combin. Theory Ser. A} {\bf 86} (1999), 323--347.  

\bibitem{Sz} 
M.\ Szabados, Distances of group tables and Latin squares via equilateral triangle dissections, {\it J. Combin. Theory Ser. A} {\bf 123} (2014), 1--7.

\end{thebibliography}
\end{document}